\numberwithin{figure}{section}
\numberwithin{equation}{section}
\newtheorem{Lemma}[equation]{Lemma}
\newtheorem{Theorem}[equation]{Theorem}
\newtheorem*{thm*}{Theorem}
\newtheorem*{Question*}{Question}
\newtheorem{Question}[equation]{Question}
\newtheorem{Corollary}[equation]{Corollary}
\newtheorem{Claim}[equation]{Claim}
\newtheorem*{Lemma*}{Lemma}
\newtheorem*{Corollary*}{Corollary}
\theoremstyle{remark}
\newtheorem{Remark}[equation]{Remark}
\theoremstyle{definition}
\newtheorem{Definition}[equation]{Definition}
\newtheorem{noTitle}[equation]{}
\newenvironment{enumeratei}{\begin{enumerate}%
                           [\upshape (i)]}%
                                       {\end{enumerate}}
\newcommand{\itemref}[1]{\eqref{#1}}
\renewcommand{\v} {{\lambda;\delta_1,\ldots,\delta_k}}
\newcommand{\R}{\mathbb{R}}
\newcommand{\Z}{\mathbb{Z}}
\newcommand{\C}{\mathbb{C}}
\newcommand{\CP}{{\mathbb C}{\mathbb P}}
\newcommand{\g} {\mathfrak{g}}
\newcommand{\Id}{\mathrm{Id}}
\newcommand{\tM}{\tilde{M}}
\newcommand{\tow}{\tilde{\omega}}
\newcommand{\la}{\langle}
\newcommand{\ra}{\rangle}
\newcommand{\mult} {\text{mult}}
\newcommand{\ow}{\omega}
\DeclareMathOperator{\grad}{grad}
\DeclareMathOperator{\Ham} {Ham}
\DeclareMathOperator{\Symp}{Symp}
\newif\ifdebug                                                      %
\newcommand{\printname}[1]
   {\smash{\makebox[0pt]{\hspace{-1.0in}\raisebox{8pt}{\tiny #1}}}}
\newcommand{\labell}[1] {\label{#1}{\ifdebug{\printname{#1}}\fi}}
\newcommand{\mute}[1] {}
\begin{document}

\title[Cyclic actions need not extend to circle actions]
{Homologically trivial symplectic cyclic actions need not extend to Hamiltonian circle actions}

\author[River Chiang]{River Chiang}
\address{Department of Mathematics, National Cheng Kung University, Tainan 701, Taiwan}
\email{riverch@mail.ncku.edu.tw}

\author[Liat Kessler]{Liat Kessler}
\address{Department of Mathematics, Physics, and Computer Science, University of Haifa,
at Oranim, Tivon 36006, Israel}
\email{lkessler@math.haifa.ac.il}

\begin{abstract}
	We give examples of symplectic actions of a cyclic group, inducing a trivial action on homology, on four-manifolds that admit Hamiltonian circle actions, and show that they do not extend to Hamiltonian circle actions. Our work applies holomorphic methods to extend combinatorial tools developed for circle actions to study cyclic actions.
\end{abstract}

\subjclass[2010]{53D35, 53D20, 57R17, 57S15}

\maketitle

\section{Introduction}

\begin{Question} \labell{Q1}
Does every homologically trivial symplectic cyclic action extend to a Hamiltonian circle action on
closed connected symplectic four-manifolds that admit Hamiltonian circle actions?
\end{Question}

By a \emph{cyclic action} we mean an action of a cyclic group $\Z_n = \Z / n \Z$ of finite order $1<n< \infty$.
An action is called \emph{homologically trivial} if it induces the identity map on homology.
A circle action is  always homologically trivial because the circle group is connected.
An effective symplectic action of a Lie group $G$ on a symplectic manifold $(M,\omega)$ is called \emph{Hamiltonian} if it admits a \emph{moment map}, i.e., an equivariant smooth map $$\Phi \colon M \to {\g}^{*}$$ such that the components $\Phi^{\xi}=\langle \Phi,\xi \rangle$ satisfy Hamilton's equation
$$d \Phi^{\xi}=-\iota(\xi_{M})\omega,$$
for all $\xi \in \g$. Here $\xi_M$ is the vector field that generates the action on $M$ of the one-parameter subgroup $\{\exp(t \xi) \, |\, t \in \R\}$ of $G$.
There are four-manifolds that admit symplectic cyclic actions but not circle actions \cite{chen1, chen2}. Here we are only interested in the symplectic manifolds that admit Hamiltonian circle actions.

Question \ref{Q1} was asked by Jarek Kedra
at the ``Hofer 20" conference in Edinburgh in July 2010. It remained open since then.

In this paper we show that the answer is ``No''. The novelty of our work is in the application of holomorphic methods to extend combinatorial tools developed for circle actions to study cyclic actions.

In Section \ref{ex-simply} we give an example of a symplectic action of $\Z_{2}$, acting trivially on homology, on a symplectic manifold $M$ that is obtained from $\CP^2$ with the Fubini--Study form by six symplectic blowups,
and show that it cannot extend to a Hamiltonian circle action, though the manifold does admit a Hamiltonian circle action. Note that $H^{1}(M;\R)=\{0\}$, hence every symplectic circle action on $M$ is Hamiltonian.

In the construction of the $\Z_{2}$-action, we start from a Hamiltonian circle action on $\CP^2$ blown up symplectically five times, that admits a $\Z_{2}$-fixed sphere, and perform a $\Z_{2}$-equivariant complex blowup at a point in the $\Z_{2}$-fixed sphere that is not $S^1$-fixed. We use Nakai's criterion for the existence of a K\"ahler form in a cohomology class, as well as the fact that the obtained complex manifold is a weak del Pezzo surface of degree $3$, and the results of Bruce--Wall \cite{BW} on holomorphic curves in cubic surfaces.
We then apply the results of  Li--Liu \cite{liliu-ruled}, Taubes--Seiberg--Witten \cite{taubes1} and Karshon--Kessler \cite{blowups} to show that this K\"ahler form is obtained by symplectic blowups, of certain sizes, from the  Fubini--Study form on $\CP^2$.

 In the proof that the action does not extend to a Hamiltonian circle action, we show that the configuration of invariant spheres of negative self intersection in the constructed $\Z_2$-action is different from that of
any Hamiltonian circle action on the symplectic manifold.
For this we use the decorated graphs associated to Hamiltonian circle actions on four-manifolds by Karshon \cite{karshon} and  Karshon--Kessler--Pinsonnault \cite{algann} characterization of circle actions on symplectic blowups of $\CP^2$.

In Section \ref{ex-nonsimply} we give an example of a homologically trivial symplectic action of $\Z_{2}$ on a symplectic manifold that is obtained by three symplectic blowups from $\Sigma \times S^2$ with a symplectic ruling, where $\Sigma$ is a Riemann surface of positive genus. We show that it cannot extend to a Hamiltonian circle action, though the manifold does admit a Hamiltonian circle action. In this section we use Holm--Kessler \cite{holmkessler} characterization of Hamiltonian circle actions on symplectic four-manifolds that are not simply connected.

We conclude our paper, in Section \ref{remarks}, with remarks concerning  
the generator of the $\Z_2$-action from Section \ref{ex-simply}. 

\bigskip

{\bf Acknowledgements.}  We thank Yael Karshon and Leonid Polterovich for telling us about Kedra's question. We thank them as well as Masanori Kobayashi and Kaoru Ono for helpful suggestions and stimulating discussions. We thank Shin-Yao Jow for teaching us how to do Claim \ref{generators} \itemref{part3}.
RC is supported by MoST 103-2115-M-006-004 and NCTS, Taiwan.

\bigskip

\section{Hamiltonian circle actions and decorated graphs}

A \emph{Hamiltonian $S^1$-space} is a triple $(M, \ow, \Phi)$ that consists of a closed connected symplectic four-manifold $(M, \ow)$ with a Hamiltonian $S^1$-action and moment map $\Phi: M \to \R$, where we identify the Lie algebra of $S^1$ and its dual with $\R$.
The moment map
$\Phi$ is a Morse--Bott function whose
critical set corresponds to the fixed point set of the action, which is a union of symplectic submanifolds by the local normal form theorem for Hamiltonian actions.
Since $M$ is four-dimensional, the critical set contains only
isolated points and two-dimensional submanifolds. The latter
can only occur at the extrema of $\Phi$.  The maximum and minimum of the moment map is each attained on exactly one component of the fixed point set.

An $S^1$ invariant Riemannian metric $\langle \cdot, \cdot \rangle$ on a Hamiltonian $S^1$-space $(M, \ow, \Phi)$ is called \emph{compatible} if the automorphism $J \colon TM \to TM$ defined by $\langle \cdot, \cdot \rangle=\omega(\cdot,J\cdot)$ is an almost complex structure, i.e., $J^2 = -\Id$. Such a $J$ is $S^1$ invariant, and we refer to it as a \emph{compatible almost complex structure}.
With respect to a compatible metric, the gradient vector field of the moment map is
\begin{equation}\labell{gradient}
\grad \Phi = -J \xi_M,
\end{equation}
where $J$ is the corresponding almost complex structure and $\xi_M$ is the vector field that generates the $S^1$-action.
The vector fields $\xi_M$ and $J\xi_M$ generate a $\C^\times = (S^1)^\C$ action.
The closure of a non-trivial $\C^\times$ orbit is a sphere, called a \emph{gradient sphere}. On a gradient sphere, $S^1$ acts by rotation with two fixed points at the north and south poles; all other points on the sphere have the same stabilizer.
A gradient sphere is smooth at its poles except in the following situation \cite{ah}:
if the gradient sphere is free (i.e., of trivial stabilizer) and the pole in question is an isolated minimum (or maximum) of $\Phi$ with both isotropy weights $> 1$ (or $< -1$).
In particular, a non-free gradient sphere is smoothly embedded.

The existence of non-free gradient spheres does not depend on the compatible metrics (or almost complex structures); each of these coincides with a $\Z_n$-sphere for some $n>1$, which is a connected component of the closure of the set of points in $M$ with stabilizer $\Z_n$.
On the other hand, a free gradient sphere connecting two interior fixed points can break into two free gradient spheres connecting to the maximum and minimum respectively, after a perturbation of compatible metrics in its neighborhood. This is demonstrated in Figure \ref{metric}, where the free gradient spheres are depicted as edges of label $1$ (as explained in the following paragraph).
For a \emph{generic} compatible metric on $(M,\omega, \Phi)$, there exists no free gradient sphere whose north and south poles are both interior fixed points \cite[Corollary 3.8]{karshon}.


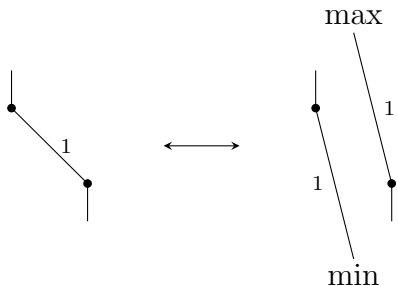
\begin{figure}[h]
\begin{tikzpicture}
[
	>=stealth,
	point/.style = {draw, circle,  fill = black, inner sep = 1pt},
]
	\draw node at (0,1) [point]{};
	\draw node at (1,0) [point]{};
	\draw (0,1.5) -- (0,1) -- node[right]{\tiny $1$}(1,0) -- (1,-0.5);	
	\draw[<->] (2,0.5) to (3,0.5);
	\draw node at (4,1) [point]{};
	\draw node at (5,0) [point]{};
	\draw node at (4.5,2.2) {max};
	\draw node at (4.5,-1.2) {min};
	\draw (4,1.5) -- (4,1) -- node[left]{\tiny $1$}(4.5,-1);	
	\draw (4.5,2) -- node[right]{\tiny $1$} (5,0) -- (5,-0.5);	
\end{tikzpicture}
\caption{Effect on the free gradient spheres after a change of compatible metric (or almost complex structure).}\labell{metric}
\end{figure}


With each Hamiltonian $S^1$-space, we associate a \textbf{decorated graph} as follows.
For each isolated fixed point, there is a vertex
labeled by its moment map value.
For each fixed surface $S$, there is
a \emph{fat} vertex labeled by its moment map value,
its size $\frac{1}{2\pi} \int_S \omega$,
and its genus $g$. The moment map value determines the vertical placement of a (fat or not) vertex. The horizontal placement is done at liberty.
For each $\Z_n$-sphere, $n>1$, there is an edge connecting the vertices corresponding to its fixed points and labeled by the integer $n$.
We add edges labeled $1$ for free gradient spheres so that any vertex corresponding to an interior fixed point is attached to one edge from above and one edge from below. We may add a number of \emph{redundant} edges labeled $1$ for free gradient spheres from the maximum of $\Phi$ to the minimum of $\Phi$ if $1$ is an isotropy weight (subject to sign) for both the maximum and minimum.

Two Hamiltonian $S^1$-spaces $(M,\omega,\Phi)$ and $(M',\omega',\Phi')$ are isomorphic if and only if they have the same decorated graph without redundant edges for a generic compatible metric \cite[Proposition 4.1]{karshon}. Vertical translations of the graph correspond to equivariant symplectomorphisms of the associated manifold, and flips upside down correspond to automorphisms of the circle.

\begin{Remark}
	The decorated graph defined here is the \emph{extended graph} in \cite[Section 5]{karshon}, which adds edges of label $1$ to the \emph{graph} in \cite[Section 2.1]{karshon}. For a generic compatible metric, the extended graph without redundant edges is determined by the graph \cite[Lemma 3.9]{karshon}.
\end{Remark}

\begin{Remark}
Since the Hamiltonian $S^1$-action is required to be effective, only edges of label $1$ can emerge from a fixed surface. Two edges sharing the same vertex have relatively prime edge labels since they represent the isotropy weights at the fixed point (subject to sign). Moreover,
the sphere $S$ corresponding to an edge of label $n$ is a symplectic sphere whose size $\frac{1}{2\pi} \int_S \ow$ is $1/n$ of the difference of the moment map values of its vertices.
\end{Remark}

\begin{Remark}
In our figures, we often omit some of the labels.
We omit the moment map labels; they are represented by the heights of the vertices.
We omit the genus labels:
in Sections \ref{background1} and \ref{ex-simply}, the genus labels are $0$; in Section \ref{ex-nonsimply}, the genus labels $g=g(\Sigma)$ are positive. 
We omit the size labels; the width of fat vertices,
or any horizontal space of the graphs, does not carry actual meanings.
We omit the edge label $1$ for free gradient spheres.
\end{Remark}

A \emph{(parametrized) $J$-holomorphic curve}  is a map from a compact Riemann surface to an almost complex manifold,
$f\colon (\Sigma, j) \to (M,J)$,  that satisfies
the Cauchy--Riemann equation
$$df \circ j = J \circ df.$$ When $\Sigma = \CP^1$, the map $f$ is called a \emph{(parametrized) $J$-holomorphic sphere}.
By \eqref{gradient} and the fact that a non-free gradient sphere is smoothly embedded, we can deduce that each $\Z_n$-sphere is $J$-holomorphic. This is a general phenomenon for embedded non-free spheres by the following lemma:

\begin{Lemma} \labell{clsta}
Let $(M,J)$ be an almost complex manifold such that $J$ is invariant under an action of a compact Lie group $G$ on $M$, i.e., $d \sigma_{g} \circ J=J \circ d \sigma_{g}$ for all $g \in G$. Let $S$ be an embedded sphere in $M$. Assume that $S$ is a connected component of the fixed point set of a non-trivial subgroup $\{e\} \neq H<G$. Then $S$ is a $J$-holomorphic sphere.
\end{Lemma}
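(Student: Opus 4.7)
The plan is to show that the tangent space to $S$ at each point is $J$-invariant, then conclude that $S$ inherits a complex structure for which the inclusion is $J$-holomorphic.

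First I would fix $p \in S$ and identify $T_p S$ with the $H$-fixed subspace $(T_p M)^H \subset T_p M$. Since $G$ is a compact Lie group, so is $H$, and the fixed point set of a smooth action of a compact Lie group is a closed embedded submanifold whose tangent space at any fixed point coincides with the fixed subspace of the isotropy representation. This is a standard consequence of the slice theorem (via an invariant Riemannian metric and the equivariant exponential map, under which the fixed set is locally the image of $(T_p M)^H$). Since $S$ is a connected component of $\mathrm{Fix}(H)$, this gives $T_p S = (T_p M)^H$.

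Next I would use the $G$-invariance of $J$. For every $h \in H$ the identity $d\sigma_h \circ J = J \circ d\sigma_h$ holds on $T_p M$, so $J$ preserves the $H$-fixed subspace; combined with the previous step, $J(T_p S) \subseteq T_p S$. Therefore $J$ restricts to a smooth bundle endomorphism $j := J|_{TS}$ of $TS$ with $j^2 = -\mathrm{Id}$, i.e., an almost complex structure on $S$. Since $S$ is a real two-dimensional manifold, this almost complex structure is automatically integrable (the Nijenhuis tensor vanishes for dimensional reasons), so $(S,j)$ is a Riemann surface, and since $S$ is diffeomorphic to a sphere, $(S,j)$ is biholomorphic to $(\CP^1, j_{\mathrm{std}})$.

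Finally, taking $f \colon (S,j) \hookrightarrow (M,J)$ to be the inclusion, the equation $df \circ j = J \circ df$ holds pointwise by the very definition of $j$ as the restriction of $J$ to $TS$. Hence $f$ is a $J$-holomorphic sphere, as required. The only mildly nontrivial ingredient is the identification $T_p S = (T_p M)^H$; everything else is a formal consequence of $G$-invariance of $J$ and the low dimension of $S$.
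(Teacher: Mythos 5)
Your proposal is correct and follows essentially the same route as the paper: identify $T_pS$ with the $H$-fixed subspace of $T_pM$ via the local normal form (slice theorem), observe that $G$-invariance of $J$ forces $J$ to preserve this subspace, and then note that in real dimension two the restricted structure makes $S$ a Riemann surface with $J$-holomorphic inclusion. The only difference is that you spell out the automatic integrability and the biholomorphism with $\CP^1$, which the paper leaves implicit.
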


\begin{proof}
First, it follows from the local normal form of a compact Lie group action on a manifold that
$TS=\{v \in TM \,|\, d\sigma_{h}v=v \text{ for all }h \in H\}$.
Since each of the maps $\sigma_{h}$ is $J$-holomorphic, for every $w \in TS$, the vector $Jw$ satisfies
$$d\sigma_{h}(Jw)=J(d\sigma_{h}w)=Jw.$$
Hence $TS$ is closed under $J$.
As an almost complex manifold of real dimension two, $(S,J|_{TS})$ is a complex manifold, and the embedding of the sphere is $J$-holomorphic (with respect to this complex structure).
\end{proof}

\subsection*{Equivariant symplectic blowups}

Recall that we can think of a symplectic blowup of size $\delta=r^2/2$ as cutting out an embedded ball of radius $r$ and identifying the boundary to an exceptional sphere via the Hopf map. This carries a symplectic form that integrates on the sphere to $2\pi \delta$.
For more details see \cite{GS:birational} and \cite[Section 7.1]{MS:intro}. If the embedding of the ball is $G$-equivariant centered at a $G$-fixed point, then the $G$-action extends to the symplectic blowup, see details in \cite{kk}. If the action is Hamiltonian, its moment map naturally extends to the equivariant symplectic blowup.

For all the possible effects of $S^1$-equivariant symplectic blowups at fixed points of a
Hamiltonian $S^1$-space $(M,\omega,\Phi)$, see Figures \ref{circlebl1}, \ref{circlebl2}, and \ref{circlebl3}, together with Figures \ref{circlebl2} and \ref{circlebl3} turned upside down \cite{karshon}.

\begin{Remark}\labell{blowup-size}
By  
\cite[Proposition 7.2]{karshon},
a Hamiltonian $S^1$-space
admits an $S^1$-equivariant blowup of size $\delta > 0$ centered at
some fixed point
if and only if one obtains a \emph{valid} decorated graph after the blowup. That is, the (fat or not) vertices created in the blowup do not surpass the other pre-existing (fat or not) vertices in the same chain of edges, and the fat vertices after the blowup have positive size labels. For quantitative description, see \cite[Lemma 3.3]{kk}.
\end{Remark}

\begin{figure}[h]
\begin{tikzpicture}
[
	>=stealth,
	point/.style = {draw, circle,  fill = black, inner sep = 1pt},
]
	\draw (-.5,1) -- node [right] {\tiny $m$} (0,0) node [left] {\tiny $\alpha$}
	-- node [right] {\tiny $n$} (-.5,-1);	
	\draw node at (0,0) [point]{};
	\draw[->] (1,0) to (2,0);
	\draw (3.5,1) -- node [right] {\tiny $m$} (4,0.3)  node [left] {\tiny $\alpha + m\delta$}
	-- node [right] {\tiny $m+n$} (4, -0.3) node [left] {\tiny $\alpha -n\delta$}
	-- node [right] {\tiny $n$} (3.5,-1);	
	\draw node at (4,0.3) [point]{};
	\draw node at (4, -0.3) [point]{};
\end{tikzpicture}
\caption{$S^1$-equivariant blowup at an interior fixed point.}\labell{circlebl1}
\end{figure}
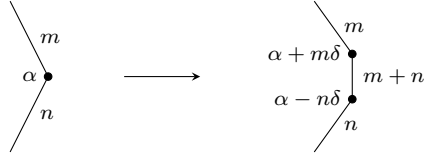

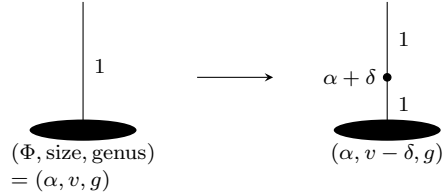
\begin{figure}[h]
\begin{tikzpicture}
[
	>=stealth,
	point/.style = {draw, circle,  fill = black, inner sep = 1pt},
]
	\fill (0,-0.7) ellipse [x radius=20pt, y radius=4pt] node [below=2pt, align=left, node font=\tiny] {$(\Phi, \text{size}, \text{genus})$\\$=(\alpha, v, g)$};
	\draw (0,1) -- node[right]{\tiny $1$}(0, -0.7);
	\draw[->] (1.5,0) to (2.5,0);
	\fill (4,-0.7) ellipse [x radius=20pt, y radius=4pt] node [below=2pt, align=left, node font=\tiny] {$(\alpha, v-\delta, g)$};
	\draw node at (4,0) [point]{} ;
	\draw (4,1) -- node[right]{\tiny $1$}(4, 0) node[left]{\tiny $\alpha + \delta$} -- node[right]{\tiny $1$} (4, -0.7);
\end{tikzpicture}
\caption{$S^1$-equivariant blowup at a point on the minimal surface.}\labell{circlebl2}
\end{figure}

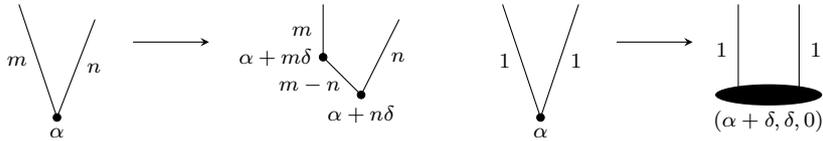
\begin{figure}[h]
\begin{tikzpicture}
[
	>=stealth,
	point/.style = {draw, circle,  fill = black, inner sep = 1pt},
]
	\draw (-0.5,0.5) -- node [left] {\tiny $m$} (0,-1) node [below] {\tiny $\alpha$}
	-- node [right] {\tiny $n$} (0.5,0.3);	
	\draw node at (0,-1) [point]{};
	\draw[->] (1,0) to (2,0);
	\draw (3.5,0.5) -- node [left] {\tiny $m$} (3.5,-0.2) node [left] {\tiny $\alpha + m\delta$}
	-- node [left, near end] {\tiny $m-n$} (4,-0.7) node [below] {\tiny $\alpha + n \delta$}
	-- node [right] {\tiny $n$} (4.5,0.3);	
	\draw node at (4,-0.7) [point]{};
	\draw node at (3.5,-0.2) [point]{};	
\end{tikzpicture}
\qquad
\begin{tikzpicture}	
[
	>=stealth,
	point/.style = {draw, circle,  fill = black, inner sep = 1pt},
]	
	\draw (5.5,0.5) -- node [left] {\tiny $1$} (6,-1) node [below] {\tiny $\alpha$}
	-- node [right] {\tiny $1$} (6.5,0.5);	
	\draw node at (6,-1) [point]{};
	\draw[->] (7,0) to (8,0);
	\fill (9,-0.7) ellipse [x radius=20pt, y radius=4pt] node [below=2pt] {\tiny $(\alpha + \delta, \delta, 0)$};
	\draw (8.6, -0.7) -- node [left]{\tiny $1$} (8.6, 0.5);
	\draw (9.4, -0.7) -- node [right]{\tiny $1$} (9.4, 0.5);
\end{tikzpicture}
\caption{$S^1$-equivariant blowup at an isolated minimum, where $m > n$.}\labell{circlebl3}
\end{figure}

\section{Blowups of $\CP^2$}
\labell{background1}

\subsection*{Symplectic blowups of $\CP^2$.}

For each nonnegative integer $k$, let $M_k$ denote a complex blowup of the complex projective plane $\CP^2$ at $k$ points.
We have a decomposition
\begin{equation} \labell{eqsplit}
H_2(M_k) = \Z L \oplus \Z E_1 \oplus \ldots \oplus \Z E_k,
\end{equation}
where $L$ is the image of the homology class of a line $\CP^1$
in $\CP^2$ under the inclusion map $H_2(\CP^2) \to H_2(M_k)$
and $E_1,\ldots,E_k$ are the homology classes of the exceptional
divisors.
As a smooth oriented manifold, $M_k$ is  $\CP^2 \# k \overline{\CP^2}$, the manifold that is obtained form $\CP^2$ (with the standard orientation) by $k$ iterations of connected sum with   $\overline{\CP^2}$ (i.e., $\CP^2$ with the opposite orientation).

\begin{Definition}\labell{bupdef}
A \textbf{blowup form}
on $M_k$ is a symplectic form for which
there exist pairwise disjoint
embedded symplectic spheres in the classes $L,E_1,\ldots,E_k$.
\end{Definition}

\begin{Lemma} \labell{deformation class} \cite[Lemmas 1.5, 1.6]{blowups}
The set of blowup forms on $M_k$ is an equivalence class
under the following equivalence relation:
symplectic forms $\omega$ and $\omega'$ on $M_k$
are equivalent if and only if there exists a diffeomorphism $f \colon M_k \to M_k$
that acts trivially on homology
and such that $f^*\omega$ and $\omega'$
are homotopic through symplectic forms.

Any two cohomologous blowup forms on $M_k$
are diffeomorphic by a diffeomorphism that acts trivially
on homology.
\end{Lemma}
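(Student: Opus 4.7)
The plan is to reduce both statements to the classification up to isotopy of symplectic packings of $(\CP^2, \omega_{\FS})$ by disjoint balls of prescribed sizes. Given a blowup form $\omega$ on $M_k$ with pairwise disjoint symplectic representatives $C, S_1, \dots, S_k$ of the classes $L, E_1, \dots, E_k$, the symplectic neighborhood theorem for symplectic spheres of self-intersection $\pm 1$ provides standard tubular neighborhoods. One can then simultaneously symplectically blow down the exceptional spheres $S_i$ and identify a neighborhood of $C$ with a neighborhood of a projective line in $(\CP^2, \lambda \omega_{\FS})$, where $\lambda = \int_L \omega$. This exhibits $(M_k, \omega)$ as the symplectic blowup of $(\CP^2, \lambda \omega_{\FS})$ at $k$ disjoint balls of sizes $\delta_i = \int_{E_i} \omega$.

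First I would settle the cohomologous case. If $\omega$ and $\omega'$ are cohomologous blowup forms, the reduction above produces two ball embeddings into $(\CP^2, \lambda \omega_{\FS})$ with the same prescribed sizes $(\delta_1, \dots, \delta_k)$. By McDuff's connectedness theorem for the space of symplectic packings of $\CP^2$ by balls of fixed sizes, the two embeddings are joined by a smooth isotopy of disjoint symplectic ball embeddings. Blowing up along this isotopy yields a smooth family $\omega_t$ of symplectic forms on $M_k$ in the constant cohomology class $[\omega]$; Moser's stability theorem then produces a diffeomorphism $f$ isotopic to the identity, hence acting trivially on homology, with $f^*\omega' = \omega$. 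This proves the second assertion.

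For the first assertion I would combine this with a deformation of sizes. Given blowup forms $\omega$ and $\omega'$ of possibly different cohomology classes, present each as a blowup of $\CP^2$ and linearly interpolate both $\lambda$ and the vector $(\delta_i)$, keeping all sizes positive and small enough for a disjoint embedding to persist; this produces a path of blowup forms joining $\omega$ to some form cohomologous to $\omega'$, after which the previous paragraph applies. Conversely, if $f^*\omega$ is connected to $\omega'$ through a path $\omega_t$ of symplectic forms, then choosing an $\omega_t$-tame almost complex structure along the path and applying Gromov compactness with positivity of intersections yields embedded $J_t$-holomorphic representatives of $L, E_1, \dots, E_k$ that remain pairwise disjoint throughout, which at $t=1$ exhibits $\omega'$ as a blowup form. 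The main obstacle is the connectedness of the symplectic packing space of $\CP^2$ for prescribed ball sizes; this relies on nontrivial pseudoholomorphic curve input (McDuff's inflation argument and the uniqueness of exceptional classes), and once that is granted the remainder is a standard Moser and isotopy exercise.
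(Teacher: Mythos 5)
The paper does not actually prove this lemma: it is quoted from \cite[Lemmas 1.5, 1.6]{blowups}, with the proof attributed to Gromov \cite{gromovcurves}, McDuff \cite{rational-ruled}, \cite{isotopy}, and \cite[Proposition 7.21]{MS:intro}. Your sketch reconstructs essentially that argument --- blow down to a ball packing of $(\CP^2,\lambda\omega_{\FS})$, invoke the connectedness of the space of symplectic embeddings of disjoint balls of fixed sizes (McDuff's uniqueness of symplectic blowing up, itself proved by the deformation-to-isotopy and inflation technique of \cite{isotopy}), and finish with Moser --- so for the second assertion and for the ``any two blowup forms are equivalent'' half of the first assertion you are following the same route as the cited sources, and those parts are sound.

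The one step that is genuinely under-justified is the converse half of the first assertion, where you claim that ``Gromov compactness with positivity of intersections yields embedded $J_t$-holomorphic representatives of $L, E_1,\ldots,E_k$ that remain pairwise disjoint throughout'' the path. Gromov compactness only produces a limiting cusp curve; it does not by itself guarantee that an irreducible embedded representative of each class survives, and for larger $k$ the exceptional classes can a priori degenerate into multi-component configurations at some $t$. The honest input is not needed along the whole path but only at $t=1$, and it is Seiberg--Witten-theoretic rather than purely Gromov-theoretic: since $c_1(TM_k)$ is unchanged along a homotopy of symplectic forms and under a homologically trivial diffeomorphism, any form equivalent to a blowup form has standard canonical class, and then Taubes \cite{taubes1} and Li--Liu \cite{liliu-ruled} give embedded symplectic spheres in $L,E_1,\ldots,E_k$, pairwise disjoint for a common generic tame $J$ by positivity of intersections. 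This is exactly Corollary \ref{cord} of the present paper; replacing your compactness argument by an appeal to that corollary (or to the Taubes and Li--Liu results behind it) closes the gap.
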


This is a result of the work of Gromov and McDuff,
 \cite[2.4.A', 2.4.A1']{gromovcurves},
 \cite{rational-ruled}, \cite[Proposition~7.21]{MS:intro}, and \cite{isotopy}. 

For a blowup form $\omega$ on $M_k$, the vector $(\frac{1}{2\pi} \la [\omega] , L \ra;\frac{1}{2\pi} \la [\omega] , E_1 \ra,\ldots,\frac{1}{2\pi} \la [\omega] , E_k \ra )$ defined by the pairing of its cohomology class with the homology classes $(L;E_1,\ldots,E_k)$ equals the vector  $(\frac{1}{2\pi}\int_{C}\omega;\frac{1}{2\pi}\int_{C_1}\omega,\ldots,\frac{1}{2\pi}\int_{C_k}\omega)$, where $C, \, C_1,\ldots,C_k$ are embedded symplectic spheres in $L,\, E_1,\ldots,E_{k}$.
We will say that the blow up form (or its cohomology class) is \emph{encoded} by this vector.
By Lemma~\ref{deformation class},
the blowup form on $M_k$ encoded by the vector $(\v)$ is unique up to a diffeomorphism that acts trivially on homology.
We denote any of these symplectic manifolds by
$$ ( M_k , \omega_{\lambda;\delta_1,\ldots,\delta_k}).$$

By Li--Liu \cite[Theorem A]{liliu-ruled} and Taubes--Seiberg--Witten theory \cite{taubes1}, for every symplectic form on $M_k$ with a \emph{standard canonical class}, that is, for which  the pairing of the first Chern class $c_{1}(TM_k)$ with $(L;E_1,\ldots,E_k)$  equals $2\pi (3;1,\ldots,1)$, the classes $L,E_1,\ldots,E_k$ are represented by symplectically embedded spheres.
The next corollary follows. See \cite[Remark 6.1]{blowups}.
See also the discussion in Salamon \cite[Examples 3.8, 3.9, 3.10]{salamon}.

\begin{Corollary} \labell{cord}{\cite[Remark 6.1]{blowups}}
Every symplectic form on $M_k$ with a standard canonical class is a blowup form.
\end{Corollary}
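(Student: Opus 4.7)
The plan is to upgrade the individual existence of symplectically embedded sphere representatives of the classes $L, E_1, \ldots, E_k$ (recalled in the paragraph preceding the corollary) to a configuration of pairwise \emph{disjoint} such spheres. The tool that will deliver disjointness essentially for free is positivity of intersection for $J$-holomorphic curves in an almost complex 4-manifold.

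First, I would fix a symplectic form $\omega$ on $M_k$ with standard canonical class and choose a generic $\omega$-compatible almost complex structure $J$. Appealing to Li--Liu's Theorem~A together with Taubes--Seiberg--Witten theory, as recorded just before the statement, I would extract for this $J$ embedded $J$-holomorphic spheres $C, C_1, \ldots, C_k$ representing $L, E_1, \ldots, E_k$ respectively. Such spheres are automatically symplectically embedded, since $\omega(\cdot, J\cdot)$ is positive definite on any complex line in $TM_k$.

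Second, I would apply positivity of intersection to the collection $C, C_1, \ldots, C_k$. The homological intersection numbers satisfy $L \cdot E_i = 0$ and $E_i \cdot E_j = 0$ for $i \neq j$, and no two of these spheres can coincide as subsets because their homology classes are pairwise distinct. Since every intersection point of two distinct $J$-holomorphic curves contributes a strictly positive local multiplicity summing to the homological intersection, the pairs whose classes pair to zero must be disjoint. This produces the configuration required by Definition~\ref{bupdef}, so $\omega$ is a blowup form.

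The main obstacle I anticipate is the simultaneous existence of $J$-holomorphic representatives of all $k+1$ classes for a single compatible $J$. For the exceptional classes $E_i$ (with $E_i^2 = -1$ and $c_1 \cdot E_i = 1$), Taubes--SW theory yields an embedded $J$-holomorphic sphere in each $E_i$ for every $\omega$-tame $J$, so these come along automatically. The more delicate case is the line class $L$, which has positive self-intersection $L^2 = +1$; here one invokes Li--Liu's analysis of pseudo-holomorphic spheres in symplectic rational surfaces, which produces a representative for a residual set of tame almost complex structures. Intersecting the finitely many residual conditions gives a single generic $J$ for which all $k+1$ classes have embedded $J$-holomorphic representatives, completing the argument.
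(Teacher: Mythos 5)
Your argument is correct and follows essentially the same route as the paper (which defers the proof to its citation of Karshon--Kessler, Remark 6.1): produce embedded $J$-holomorphic spheres in $L, E_1, \ldots, E_k$ for a single generic compatible $J$ via Li--Liu and Taubes--Seiberg--Witten, then obtain pairwise disjointness from positivity of intersections because the classes pair to zero. The only minor overstatement is the claim that Taubes--SW yields an \emph{embedded} $J$-holomorphic sphere in each $E_i$ for \emph{every} tame $J$ (in general one gets a possibly singular or reducible representative, with embeddedness for generic $J$), but since you pass to a generic $J$ anyway this does not affect the argument.
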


\begin{Definition} \labell{reduced form}
Let $k \geq 3$, and let $\lambda, \delta_1, \ldots, \delta_k$ be real numbers.
The vector $(\lambda ; \delta_1 , \ldots , \delta_k)$
is \textbf{reduced} if
\begin{equation} \labell{conditions-1}
 \delta_1 \geq \ldots \geq \delta_k \quad \text{ and } \quad
   \delta_1 + \delta_2 + \delta_3 \leq \lambda.
\end{equation}
\end{Definition}

In \cite[Theorem 1.9]{blowups}, Karshon and Kessler show, using the work of Li--Li \cite{Li-Li02}, that for a vector $(\v)$ with positive entries that is reduced
and that satisfies the volume inequality
$\lambda^2 - {\delta_1}^2 - \ldots - {\delta_k}^2 > 0$, there exists a blowup form $\omega_{\v}$.

\subsection*{Hamiltonian $S^1$-actions on blowups of $\CP^2$.}

The decorated graphs, or their upside-down version, of Hamiltonian $S^1$-actions on $M_1$, namely, $\CP^2$ blown up at a point, with a blowup form $\ow_{\lambda;\delta_1}$ are shown in Figures \ref{graph-hirz-1} and \ref{graph-hirz-2}; the integer $n = 2\ell -1$ with $1 \leq \ell < \lambda/(\lambda-\delta_1)$.
We also mark the homology classes that correspond to the edges.


\begin{figure}[h]
\begin{tikzpicture}
[
	point/.style = {draw, circle,  fill = black, inner sep = 1pt},
	every edge quotes/.style = {font=\tiny, color = black, auto=left, inner sep = 1pt},
]
	\draw node at (0,1.5) [point]{};
	\draw node at (1.5,3) [point]{};
	\draw node at (2.25,0.75) [point]{};
	\draw node at (1.5,0) [point]{};
	\draw
	(0,1.5) edge ["$\ell L-(\ell -1)E_1$", "$c$"'] (1.5,3)
	(1.5,3) edge ["$L-E_1$", "$nc+d$"'] (2.25,0.75)
	(2.25,0.75) edge ["$(-\ell+1)L + \ell E_1$", "$c$"'] (1.5,0)
	(1.5,0) edge ["$L-E_1$", "$d$"'] (0,1.5); 	
\end{tikzpicture}
\begin{tikzpicture}
[
	point/.style = {draw, circle,  fill = black, inner sep = 1pt},
	every edge quotes/.style = {font=\tiny, color = black, auto=left, inner sep = 1pt},
]	
	\draw node at (0,0.75) [point]{};
	\draw node at (0,2.25) [point]{};
	\draw node at (1.5,3) [point]{};
	\draw node at (1.5,0) [point]{};
	\draw
	(0,0.75) edge ["$(-\ell+1)L + \ell E_1$", "$c$"'] (0,2.25)
	(0,2.25) edge ["$L-E_1$", "$nc-d$"' near start] (1.5,3)
	(1.5,3) edge ["$\ell L-(\ell -1)E_1$", "$c$"'] (1.5,0)
	(1.5,0) edge ["$L-E_1$", "$d$"'] (0,0.75);
\end{tikzpicture}
\caption{Decorated graphs for $(M_1, \ow_{\lambda;\delta_1})$ with only isolated fixed points, where $n=2\ell -1$ and $1 \leq \ell < \lambda / (\lambda-\delta_1)$.}\labell{graph-hirz-1}
\end{figure}
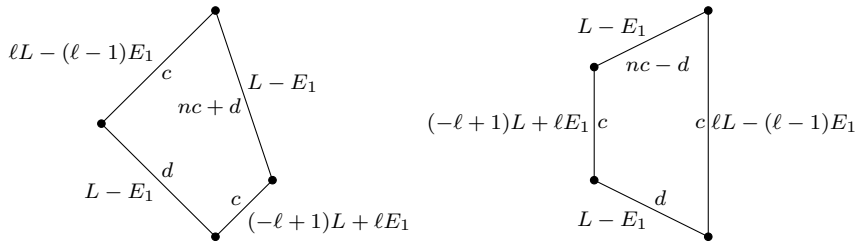

\begin{figure}[h]
\begin{tikzpicture}
[
	point/.style = {draw, circle,  fill = black, inner sep = 1pt},
	every edge quotes/.style = {font=\tiny, color = black, auto=left, inner sep = 1pt},
]		
	\draw node at (-0.5,1) [point]{};
	\draw node at (0.5,2) [point]{};
	\fill (0,0) ellipse [x radius=20pt, y radius=4pt] node[below=2pt]{\tiny $L-E_1$};
	\draw
	(-0.5,1) edge ["$n$"', "$L-E_1$"] (0.5, 2);
	\draw
	(-0.5,0) edge ["$(-\ell+1)L + \ell E_1$"] (-0.5,1);
	\draw
	(0.5,0) edge ["$\ell L-(\ell -1)E_1$"'] (0.5,2);
\end{tikzpicture}
\quad
\begin{tikzpicture}
[
	point/.style = {draw, circle,  fill = black, inner sep = 1pt},
	every edge quotes/.style = {font=\tiny, color = black, auto=left, inner sep = 1pt},
]		 	
	\fill (0,0) ellipse [x radius=20pt, y radius=4pt] node[below=2pt]{\tiny $\ell L-(\ell -1)E_1$};
	\fill (0,2) ellipse [x radius=20pt, y radius=4pt] node[above=2pt]{\tiny $(-\ell+1)L + \ell E_1$};	
	\draw (-0.5,0) edge ["$L-E_1$"] (-0.5, 2);
	\draw (0.5,0) edge ["$L-E_1$"'] (0.5, 2);
\end{tikzpicture}
\caption{Decorated graphs for $(M_1, \ow_{\lambda;\delta_1})$ with $1$ or $2$ fixed surfaces, where $n=2\ell-1$ and $1 \leq \ell < \lambda /(\lambda-\delta_1)$.}\labell{graph-hirz-2}
\end{figure}
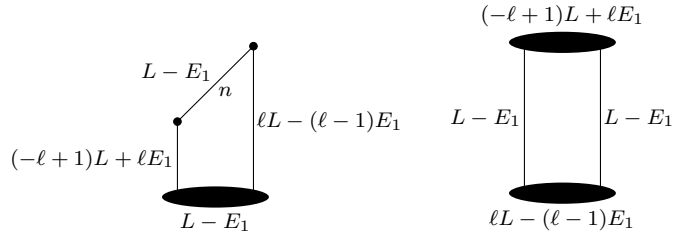

\begin{Remark}
Every Hamiltonian circle action on $\CP^2$ blown up once extends to a Hamiltonian torus action \cite[Theorem 1]{YK:max_tori}. Hence one can recover the decorated graphs in Figures \ref{graph-hirz-1} and \ref{graph-hirz-2} from the moment polytopes of toric actions, standard Hirzebruch trapezoids in this case, by projection along lines with a rational slope.
\end{Remark}

Karshon--Kessler--Pinsonnault \cite{algann} provide a characterization of Hamiltonian circle actions on blowups of $\CP^2$ as follows:

\begin{Theorem}\labell{character}\cite[Theorem 1.2]{algann} For $k \geq 3$, every Hamiltonian circle action on $M_k$ with a blowup form encoded by a reduced vector $(\lambda;\delta_1,\ldots,\delta_k)$ is obtained from a Hamiltonian circle action on $(M_{1},\omega_{\lambda;\delta_1})$ by a sequence of $S^1$-equivariant symplectic blowups of sizes $\delta_2,\ldots,\delta_k$, in this order.
\end{Theorem}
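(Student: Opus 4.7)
The plan is to proceed by downward induction on $k$, producing at each step an $S^1$-equivariant symplectic blowdown of size $\delta_k$ that takes $(M_k, \omega_{\v})$ with its given Hamiltonian $S^1$-action to $(M_{k-1}, \omega_{\lambda;\delta_1,\ldots,\delta_{k-1}})$ with an induced Hamiltonian $S^1$-action. Running the sequence of blowdowns in reverse then exhibits the original action as a sequence of equivariant blowups of sizes $\delta_2,\ldots,\delta_k$ starting from a Hamiltonian $S^1$-action on $(M_1, \omega_{\lambda;\delta_1})$, which is classified by Figures \ref{graph-hirz-1} and \ref{graph-hirz-2}.

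To produce the blowdown, I would first fix a generic $S^1$-invariant $\omega$-compatible almost complex structure $J$. By Taubes--Seiberg--Witten theory, as used in Corollary \ref{cord}, every exceptional class $E$ (with $E \cdot E = -1$ and $c_1(E) = 1$) is represented by an embedded $J$-holomorphic sphere, and for generic such $J$ this representative is unique. Since $J$ is $S^1$-invariant, the $S^1$-action sends the representative of an exceptional class to another representative in the same class; uniqueness then forces every embedded exceptional $J$-holomorphic sphere in $M_k$ to be $S^1$-invariant.

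Second, I would locate among the exceptional classes one of minimal symplectic area. Combining the Li--Li description of the cone of exceptional classes for a blowup form on $M_k$ with the reduced inequalities $\delta_1 \geq \ldots \geq \delta_k$ and $\delta_1+\delta_2+\delta_3 \leq \lambda$, one shows that the minimum of $\frac{1}{2\pi}\la [\omega], E \ra$ over exceptional classes equals $\delta_k$, attained by $E_k$ (possibly after a diffeomorphism of $M_k$ acting trivially on homology as supplied by Lemma \ref{deformation class}). Let $C$ denote the corresponding unique embedded, necessarily $S^1$-invariant, $J$-holomorphic $(-1)$-sphere; it has symplectic area $2\pi \delta_k$. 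A standard $S^1$-equivariant tubular neighborhood theorem identifies a neighborhood of $C$ with the standard symplectic blowup of a ball in $\C^2$, and the $S^1$-equivariant symplectic blowdown along $C$ of size $\delta_k$ yields a closed symplectic $4$-manifold diffeomorphic to $M_{k-1}$, carrying an induced Hamiltonian $S^1$-action whose moment map descends from $\Phi$. The cohomology pairings of the new form with $L, E_1, \ldots, E_{k-1}$ are $\lambda, \delta_1, \ldots, \delta_{k-1}$, so by Lemma \ref{deformation class} the blowdown is identified with $(M_{k-1}, \omega_{\lambda;\delta_1,\ldots,\delta_{k-1}})$, closing the induction.

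The main obstacle is the identification in the second step: showing that the minimal symplectic area over exceptional classes really is $\delta_k$, and that the realizing class can be taken to be $E_k$ after a homologically trivial diffeomorphism. This forces a careful inspection of the exceptional cone and of how the reduced inequalities constrain the pairings $\frac{1}{2\pi}\la[\omega], aL - \sum b_i E_i\ra$; here is precisely where the hypothesis $k \geq 3$ and all the inequalities in Definition \ref{reduced form} enter decisively. A secondary technical point is to ensure that the $S^1$-equivariant neighborhood of $C$ matches the standard symplectic blowup model, so that the quotient simultaneously inherits the symplectic form and the Hamiltonian $S^1$-action with the expected moment-map data.
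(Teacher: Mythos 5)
First, a point of comparison: the paper does not prove this theorem at all --- it is quoted from Karshon--Kessler--Pinsonnault \cite{algann} --- so there is no in-text argument to match yours against. Judged on its own, your strategy (downward induction by equivariantly blowing down an invariant exceptional sphere of minimal area) is the standard one and is close in spirit to how such statements are established in the literature, but as written it has two genuine gaps.

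The decisive one is the last step of the induction, from $M_2$ to $M_1$. The reduced inequalities $\delta_1 \geq \dots \geq \delta_k$ and $\delta_1+\delta_2+\delta_3 \leq \lambda$ do guarantee that $E_k$ minimizes $\frac{1}{2\pi}\la[\omega],E\ra$ over exceptional classes while $k \geq 3$ (e.g.\ $\la \omega, L-E_i-E_j\ra/2\pi \geq \lambda-\delta_1-\delta_2 \geq \delta_3 \geq \delta_k$), and the truncated vector stays reduced, so the induction runs down to $M_3$. But on $M_2$ your claim that the minimum is $\delta_2$, attained by $E_2$, is false in general: for $(\lambda;\delta_1,\delta_2,\delta_3)=(1;\,0.45,\,0.45,\,0.1)$ the vector is reduced, yet after blowing down $E_3$ the class $L-E_1-E_2$ has area $0.1 < 0.45 = \delta_2$. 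So the minimal-area mechanism no longer hands you an invariant sphere in $E_2$, and the parenthetical appeal to Lemma \ref{deformation class} cannot repair this --- a diffeomorphism acting trivially on homology cannot change which homology class attains the minimum. The final step needs a separate argument, e.g.\ the explicit classification of Hamiltonian circle actions on $M_1$ and $M_2$ by decorated graphs (Figures \ref{graph-hirz-1}, \ref{graph-hirz-2} and their blowups). The second gap is your first step: ``generic $S^1$-invariant $J$'' does not deliver the usual transversality, and for a fixed invariant $J$ a non-minimal exceptional class may only be represented by a nodal (cusp) configuration rather than an embedded sphere. What is true, and what your argument should be restricted to, is that the exceptional class of \emph{minimal} area is represented by an embedded $J$-holomorphic sphere for \emph{every} compatible $J$ (degeneration is excluded by area considerations), that this representative is unique by positivity of intersections since $E\cdot E=-1$, and hence that it is $S^1$-invariant. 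With that restriction the inductive step $M_j \to M_{j-1}$ for $j \geq 3$ is sound, modulo the routine verification that the equivariant blowdown of the invariant sphere (a fixed surface, a $\Z_n$-sphere, or a gradient sphere) produces the expected Hamiltonian $S^1$-space; only the tail of the induction is missing.
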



\subsection*{Weak del Pezzo surfaces}

A weak del Pezzo surface $M$ is a smooth projective surface whose anti-canonical class $-K_M$ is nef and big. Its degree is the number of the self-intersection of the anti-canonical class.
Any weak del Pezzo surface $M$ is isomorphic to $\CP^1 \times \CP^1$, the Hirzebruch surface $\mathbb{F}_2$ (both of degree $8$), or a blowup of $\CP^2$ at $0 \leq k \leq 8$ points in \emph{almost general position} (of degree $9-k$), that is,
\[
M = M_k \to M_{k-1} \to \dots \to M_1 \to M_0 = \CP^2
\] where each $\pi: M_i \to M_{i-1}$ is the blowup at a point $p_i \in M_{i-1}$ not lying on a $(-2)$-curve on $M_{i-1}$.
For details, see \cite[Section 8]{dolgachev}, or \cite{an},\cite{coray-tsfasman},\cite[Section III]{demazure}
(where they are called Gorenstein log del Pezzo surfaces, generalized del Pezzo surfaces, rational surfaces of negative type, respectively).


\section{Blowups of $\Sigma \times S^2$}


\subsection*{Symplectic blowups of $\Sigma \times S^2$.}

Let $(\Sigma,j)$ be a compact connected  Riemann surface of positive genus $g=g(\Sigma)>0$ endowed with a complex structure $j$.  
We fix a smooth structure on the trivial bundle 
$\Sigma \times S^2$, and equip it
with the product complex structure, in which each fiber is a holomorphic sphere. 
We fix basepoints $\ast\in S^2$ and $\ast\in \Sigma$, and denote 
$F:=[\ast \times S^2], \,\,\, B:=[\Sigma \times \ast],$
the homology classes in $H_2(\Sigma \times S^2;\Z)$. 

For a non-negative integer $k$, let $W_{k}$ denote 
a complex blowup of $\Sigma \times S^2$ at $k$ points.
Let $E_1, \ldots, E_k$
denote the homology classes of the exceptional divisors. 

We say that a vector $(\lambda_F,\lambda_B;\delta_1,\ldots,\delta_k)$ in $\R^{2+k}$
{\bf encodes} a cohomology class $\Omega \in H^2(W_{k};\R)$
if 
$\frac{1}{2\pi} \left< \Omega , F \right> = \lambda_F$,  
$\frac{1}{2\pi} \left< \Omega , E_j \right> = \delta_j$
for $j= 1, \ldots, k$, 
and $\frac{1}{2\pi} \left< \Omega , B \right> = \lambda_B$.

For $k \geq 2$, 
we say that a cohomology class $\Omega \in H^2(W_{k};\R)$ encoded by a vector  $(\lambda_F,\lambda_B;\delta_1,\ldots,\delta_k)$ is {\bf reduced}
if 
$ \delta_1 \geq \dots \geq \delta_k$ and $\delta_1 + \delta_2 \leq \lambda_F$.

\begin{Definition}\label{def:blowup}
A {\bf blowup form} on $W_{k}$ is 
a symplectic form for which there exist disjoint 
embedded symplectic spheres (oriented by the symplectic form)  in the 
homology classes $F,E_1, \ldots, E_k$.
\end{Definition}

By \cite{isotopy}, the blowup form on $W_{k}$ (whose cohomology class is) encoded by the vector $(\lambda_F,\lambda_B;\delta_1,\ldots,\delta_k)$ is unique up to a diffeomorphism that acts trivially on homology. We denote any of these symplectic manifolds by $$(W_{k},\omega_{\lambda_F,\lambda_B;\delta_1,\ldots,\delta_k}).$$

\subsection*{Hamiltonian $S^1$-actions on blowups of $\Sigma \times S^2$.}
\label{actionsruled} 

By \cite[Theorem 6.3 and Lemma 6.15]{karshon},
the decorated graph of a Hamiltonian circle action on $\Sigma \times S^2$ with 
$\omega_{\lambda_F,\lambda_B}$
consists of two fat vertices with the same genus label $g$; moreover the difference of their heights is $\lambda_F$ and their area labels are $\{r,r+ns\}$ such that $r >0$, the integer $n=2\ell$ with $0\leq \ell < \lambda_B/ \lambda_F$, and $r+\ell \lambda_F=\lambda_B$.
See also \cite[Proposition 3.2]{holmkessler}. The decorated graphs with a redundant edge are shown in Figure \ref{graph-ruled}. We also mark the homology classes that correspond to the edges. 

\begin{figure}[h]
\begin{tikzpicture}
[
	>=stealth,
	point/.style = {draw, circle,  fill = black, inner sep = 1pt},
	every edge quotes/.style = {font=\tiny, color = black, auto=left, inner sep = 1pt},
]	
	\fill (0,0) ellipse [x radius=20pt, y radius=2pt] node[below=2pt]{\color{black}\tiny $B-\ell F$};
	\fill (0,2) ellipse [x radius=20pt, y radius=2pt] node[above=2pt]{\color{black}\tiny $B+\ell F$};
	\draw
	(-0.5,0) edge ["$F$"] (-0.5, 2);
\end{tikzpicture}
\hspace{2cm}
\begin{tikzpicture}
[
	>=stealth,
	point/.style = {draw, circle,  fill = black, inner sep = 1pt},
	every edge quotes/.style = {font=\tiny, color = black, auto=left, inner sep = 1pt},
]	
	\fill (0,0) ellipse [x radius=20pt, y radius=2pt] node[below=2pt]{\color{black}\tiny $B+\ell F$};
	\fill (0,2) ellipse [x radius=20pt, y radius=2pt] node[above=2pt]{\color{black}\tiny $B-\ell F$};
	\draw
	(-0.5,0) edge ["$F$"] (-0.5, 2);
\end{tikzpicture}
\caption{Decorated graphs for $(\Sigma\times S^2, \ow_{\lambda_F, \lambda_B})$ where $n=2\ell$ and $0\leq \ell < \lambda_B/\lambda_F$.}\labell{graph-ruled}
\end{figure}
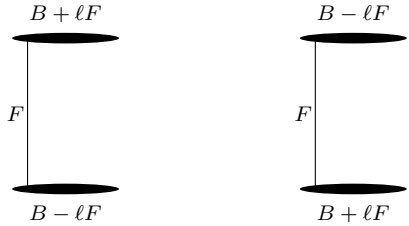

Holm--Kessler \cite{holmkessler} provide a characterization of Hamiltonian circle actions on blowups of $\Sigma \times S^2$ as follows.
\begin{Theorem} \label{charpositive} \cite[Theorem 2.13]{holmkessler}
For $k\geq 1$, 
let $\omega$ be a blowup form on $W_k$ whose cohomology class is encoded by a vector 
$(\lambda_F,\lambda_B;\delta_1,\ldots,\delta_k)$; if $k \geq 2$ assume that  $[\omega]$ is  
reduced. 
Then every Hamiltonian circle action on $(W_k,\omega)$ 
is obtained from a Hamiltonian circle action on 
$(\Sigma \times S^2, \ow_{\lambda_F,\lambda_B})$ by a 
sequence of $S^1$-equivariant symplectic blowups of sizes
$\delta_1, \delta_2, \dots, \delta_k$, in this order.
\end{Theorem}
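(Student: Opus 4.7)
The plan is to proceed by induction on $k$, in parallel with the proof of Theorem \ref{character} in \cite{algann} but adapted to the positive-genus ruled setting of \cite{holmkessler}. The base case $k=0$ is trivial (an empty sequence of blowups, and the given Hamiltonian circle action on $\Sigma\times S^2$ has decorated graph as in Figure \ref{graph-ruled}). For the inductive step, given a Hamiltonian circle action on $(W_k,\omega)$ with encoding vector reduced when $k\geq 2$, I would locate an $S^1$-invariant embedded symplectic sphere $C$ representing the exceptional class $E_k$ (whose $\omega$-area $\delta_k$ is the smallest exceptional area by reducedness), blow it down $S^1$-equivariantly using the construction recalled before Remark \ref{blowup-size} and detailed in \cite{kk}, and apply the inductive hypothesis to the resulting Hamiltonian $S^1$-space on $(W_{k-1},\omega')$, whose cohomology class is encoded by $(\lambda_F,\lambda_B;\delta_1,\ldots,\delta_{k-1})$ and remains reduced when $k-1\geq 2$ (dropping the smallest entry preserves both $\delta_1\geq\cdots\geq\delta_{k-1}$ and $\delta_1+\delta_2\leq\lambda_F$).

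To produce $C$, fix an $S^1$-invariant $\omega$-compatible almost complex structure $J$ on $W_k$. Taubes--Seiberg--Witten theory combined with the Gromov-invariant computations for ruled surfaces in \cite{liliu-ruled} produces a $J$-holomorphic stable representative of $E_k$. The reducedness of $[\omega]$, the identity $E_k\cdot E_k=-1$, positivity of intersections, and the adjunction inequality together rule out any reducible stable-map decomposition, so the representative must be a smoothly embedded $J$-holomorphic sphere $C$. To see that $C$ is $S^1$-invariant, consider the closed subgroup $H=\{t\in S^1 : t\cdot C=C\}\subset S^1$. If $H\ne S^1$, then for any $t\in S^1\setminus H$ the translate $t\cdot C$ is an embedded $J$-holomorphic sphere in class $E_k$ distinct from $C$; since both curves are irreducible, positivity of intersections forces $(t\cdot C)\cdot C\geq 0$, contradicting $E_k\cdot E_k=-1$. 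Hence $H=S^1$ and $C$ is $S^1$-invariant.

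By Lemma \ref{clsta} applied to a nontrivial stabilizer preserving $C$, or by a direct analysis of the $S^1$-action on the sphere $C$, the sphere $C$ is either a non-free gradient sphere, a free gradient sphere connecting two fixed points, or lies inside a fixed surface; in each case the equivariant local normal form allows an $S^1$-equivariant symplectic blowdown of $C$. The resulting symplectic manifold $(W_{k-1},\omega')$ carries an induced Hamiltonian circle action to which the inductive hypothesis applies, and reinserting the equivariant blowup of size $\delta_k$ at the image of $C$ recovers the original action on $(W_k,\omega)$ in the desired form.

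The main obstacle is the existence-and-embeddedness argument for the $J$-holomorphic representative of $E_k$: one must verify that, under the reduced hypothesis, no reducible stable map in class $E_k$ can arise as the Gromov limit. In the simply connected setting of \cite{algann} this is handled via classical McDuff-type pseudo-holomorphic sphere theory on rational surfaces; in the positive-genus case one must replace these with the $J$-holomorphic enumerations of \cite{holmkessler}, exploiting the ruled structure $W_k\to\Sigma$ and carefully tracking the $\omega$-areas of admissible homology classes that could appear in any decomposition of a curve representing $E_k$.
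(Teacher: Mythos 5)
First, a point of order: the paper does not prove Theorem \ref{charpositive} at all --- it is imported verbatim as \cite[Theorem 2.13]{holmkessler}, so there is no in-paper proof to compare against. Your proposal is therefore an attempted reconstruction of the argument in that reference, and as a strategy it is the right one: induct on $k$, locate an $S^1$-invariant embedded $J$-holomorphic sphere in the minimal-area exceptional class $E_k$, blow it down equivariantly, and note that reducedness is preserved. Your argument that any embedded $J$-holomorphic representative of $E_k$ must be $S^1$-invariant (via $E_k\cdot E_k=-1$ and positivity of intersections for the translates $t\cdot C$) is correct and is the standard step.

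There are, however, two genuine gaps. The first is the one you flag yourself but do not close: the existence, for an \emph{arbitrary} invariant compatible $J$, of an irreducible embedded $J$-holomorphic representative of $E_k$. Taubes--Seiberg--Witten and \cite{liliu-ruled} only give a possibly reducible (cusp-curve) representative, and ruling out decompositions $E_k=\sum a_i[C_i]$ is the technical heart of the theorem; it requires classifying which classes $aF-\sum m_jE_j$ (the $B$-coefficient vanishes for sphere components when $g>0$, but components of positive genus with nonzero $B$-coefficient must also be excluded) can carry irreducible $J$-holomorphic curves of area at most $\delta_k$, using adjunction and the reduced inequalities $\delta_1\geq\cdots\geq\delta_k$ and $\delta_1+\delta_2\leq\lambda_F$. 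Asserting that these tools ``together rule out any reducible decomposition'' is a statement of the lemma, not a proof of it. The second gap is the case $k=1$: the theorem imposes no reducedness hypothesis there, so $\delta_1$ need not be the minimal exceptional area (one can have $\delta_1>\lambda_F-\delta_1$), and your minimality-based degeneration argument does not apply; this case has to be handled separately, e.g.\ by directly classifying the decorated graphs on $W_1$ from Karshon's classification \cite{karshon}, in the spirit of Figure \ref{graph-ruled} and the surrounding discussion. Finally, the equivariant blowdown step deserves more than the one sentence you give it --- one must check that collapsing the invariant $(-1)$-sphere produces a Hamiltonian $S^1$-space whose form is again a blowup form encoded by the truncated vector --- though this part is standard given \cite{kk}.
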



\section{The example in the simply connected case} \label{ex-simply}


\begin{noTitle}\labell{example-construction}
{\bf We construct a $\Z_2$-action on $M_6$ with a blowup form $\omega_{1;\frac{1}{2},\frac{1}{4},\frac{1}{4},\frac{1}{4},\frac{3}{16},\frac{1}{8}}$} as follows:
Start from  a  $(\CP^2 \# \overline{\CP^2},\omega_{1;\frac{1}{2}})$ with the Hamiltonian circle action whose decorated graph is given in Figure \ref{graph-hirz-2} on the right with $\ell=1$.
The top fat vertex corresponds to an $S^1$-fixed symplectic sphere in $E_1$ of size $1/2$, and the bottom fat vertex corresponds to an $S^1$-fixed symplectic sphere in $L$ of size $1$.
Then perform three $S^1$-equivariant symplectic blowups of size $1/4$: one at the top fat vertex producing $E_2$, and two at the bottom fat vertex producing $E_3$ and $E_4$.
Next, perform an $S^1$-equivariant symplectic blowup of size $3/16$ at the interior fixed point created in the previous blowup. We obtain a Hamiltonian circle action on $M=M_5$ with a blowup form $\omega=\omega_{1;\frac{1}{2},\frac{1}{4},\frac{1}{4},\frac{1}{4},\frac{3}{16}}$
whose associated decorated graph, obtained as in Figures \ref{circlebl1}--\ref{circlebl3}, is shown on the left of Figure \ref{special}.
\begin{figure}[h]
\begin{tikzpicture}
[
	point/.style = {draw, circle,  fill = black, inner sep = 1pt},
	every edge quotes/.style = {font=\tiny, color = black, auto=left, inner sep = 1pt},
	loosestart/.style = {inner sep= 2pt, near start},
	looseend/.style = {inner sep= 2pt, near end},
]	 	
	\fill (0,0) ellipse [x radius=50pt, y radius=4pt] node[below=2pt]{\tiny $L-E_3-E_4$};
	\fill (0,2) ellipse [x radius=50pt, y radius=4pt] node[above=2pt]{\tiny $E_1-E_2$};	
	\draw node at (-1.5,1) [point]{};
	\draw node at (-1,1) [point]{};
	\draw node at (1.5,0.25) [point]{};
	\draw node at (1.5,1.75) [point]{};
	\draw
	(-1.5,0) edge ["$L-E_1-E_2$"] (-1.5,1)
	(-1.5,1) edge ["$E_2$"] (-1.5,2);
	\draw
	(-1,0) edge ["$E_3$"'] (-1,1)
	(-1,1) edge ["$L-E_1-E_3$"'] (-1,2);
	\draw
	(1.5,0) edge ["$E_4-E_5$"' looseend] (1.5,0.25)
	(1.5,0.25) edge ["$E_5$"', "$2$"] (1.5,1.75)
	(1.5,1.75) edge ["$L-E_1-E_4-E_5$"' loosestart] (1.5,2);
\end{tikzpicture}	
\begin{tikzpicture}
[
	point/.style = {draw, circle,  fill = black, inner sep = 1pt},
	every edge quotes/.style = {font=\tiny, color = black, auto=left, inner sep = 1pt},
	loosestart/.style = {inner sep= 2pt, near start},
	looseend/.style = {inner sep= 2pt, near end},
]	 	
	\fill (0,0) ellipse [x radius=50pt, y radius=4pt] node[below=2pt]{\tiny $L-E_3-E_4-E_6$};
	\fill (0,2) ellipse [x radius=50pt, y radius=4pt] node[above=2pt]{\tiny $E_1-E_2$};	
	\draw node at (-1.5,1) [point]{};
	\draw node at (-1.5,0.75) [point]{};
	\draw node at (-1,1) [point]{};
	\draw node at (1.5,0.25) [point]{};
	\draw node at (1.5,1.75) [point]{};
	\draw
	(-1.5,0) edge ["$E_6$"] (-1.5, 0.75)
	(-1.5,0.75) edge ["$L-E_1-E_2-E_6$"] (-1.5,1)
	(-1.5,1) edge ["$E_2$"] (-1.5,2);
	\draw
	(-1,0) edge ["$E_3$"'] (-1,1)
	(-1,1) edge ["$L-E_1-E_3$"'] (-1,2);
	\draw
	(1.5,0) edge ["$E_4-E_5$"' looseend] (1.5,0.25)
	(1.5,0.25) edge ["$E_5$"', "$2$"] (1.5,1.75)
	(1.5,1.75) edge ["$L-E_1-E_4-E_5$"' loosestart] (1.5,2);
\end{tikzpicture}	
\caption{Special decorated graphs.}\labell{special}
\end{figure}
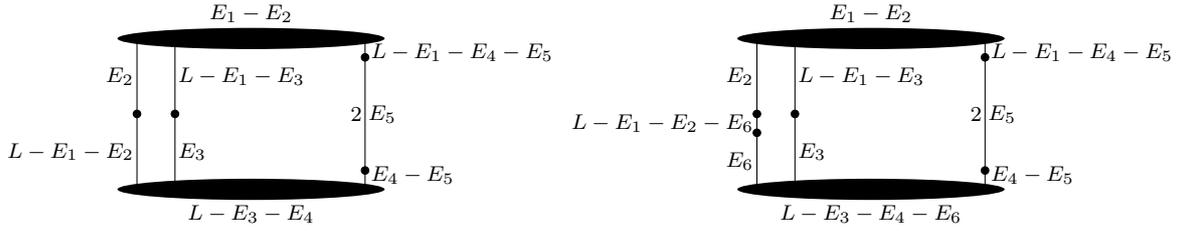
Note that the edge created in the last blowup is labelled $2$, i.e., it corresponds to a $\Z_{2}$-sphere in $E_5$.
By \cite[Theorem 7.1]{karshon}, $(M,\omega)$ admits an integrable complex structure such that the $S^1$-action is holomorphic and the symplectic form is K\"ahler.

Now perform a ${\Z_{2}}$-equivariant complex blowup at a point $p$ in the ${\Z}_{2}$-sphere in $E_5$ that is not an $S^1$-fixed point.
Let $$\pi \colon \tilde{M} \to M$$ be the equivariant complex blowup map. Let $\tilde{E_6}=\pi^{-1}(p)$ be the exceptional divisor and $\Xi_{6}$ be its Poincar\'e dual in $H^{2}(\tilde{M})$. Let
$$
\tilde{\Omega}:=\pi^{*}\Omega-\frac{1}{8} \Xi_{6},
$$
where $\Omega$ is the cohomology class of $\omega$.
We claim that $\tilde{\Omega}$ contains a K\"ahler form. By Nakai's criterion (see \cite{FM}, \cite[Lemma C.1]{karshon}, or \cite{MP:packing}, for example), the cohomology class $\tilde{\Omega}$ on the complex surface $\tilde{M}$ is represented by a K\"ahler form if and only if $\tilde{\Omega}^2 > 0$ and $\la \tilde{\Omega},[C]\ra > 0$ for every complex curve $C \subset \tilde{M}$.

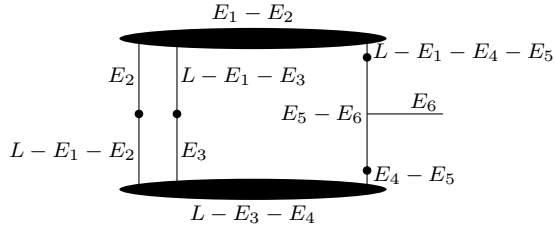
\begin{figure}[h]
\begin{tikzpicture}
[
	point/.style = {draw, circle,  fill = black, inner sep = 1pt},
	every edge quotes/.style = {font=\tiny, color = black, auto=left, inner sep = 1pt},
	loosestart/.style = {inner sep= 2pt, near start},
	looseend/.style = {inner sep= 2pt, near end},
]	 	
	\fill (0,0) ellipse [x radius=50pt, y radius=4pt] node[below=2pt]{\tiny $L-E_3-E_4$};
	\fill (0,2) ellipse [x radius=50pt, y radius=4pt] node[above=2pt]{\tiny $E_1-E_2$};	
	\draw node at (-1.5,1) [point]{};
	\draw node at (-1,1) [point]{};
	\draw node at (1.5,0.25) [point]{};
	\draw node at (1.5,1.75) [point]{};
	\draw
	(-1.5,0) edge ["$L-E_1-E_2$"] (-1.5,1)
	(-1.5,1) edge ["$E_2$"] (-1.5,2);
	\draw
	(-1,0) edge ["$E_3$"'] (-1,1)
	(-1,1) edge ["$L-E_1-E_3$"'] (-1,2);
	\draw
	(1.5,0) edge ["$E_4-E_5$"' looseend] (1.5,0.25)
	(1.5,0.25) edge ["$E_5-E_6$"] (1.5,1.75)
	(1.5,1.75) edge ["$L-E_1-E_4-E_5$"' loosestart] (1.5,2);
	\draw (1.5,1) edge ["$E_6$" near end] (2.5,1);
\end{tikzpicture}	
\caption{Special configuration.}\labell{countex}
\end{figure}

Note that $\tilde{M}$ is a weak del Pezzo surface of degree $9-6=3$ and type A4, see \cite{BW}.
Its ($-2$)- and ($-1$)-curves are in the classes
\begin{align}\labell{neg curves}
&E_4-E_5,\ E_5-E_6,\ L-E_1-E_4-E_5,\ E_1-E_2 \\
&E_6,\ L-E_3-E_4,\ E_2,\ L-E_1-E_3,\ L-E_1-E_2,\ E_3 \notag
\end{align}
as can be seen in Figure \ref{countex}. Note Figure \ref{countex} is not a decorated graph for a Hamiltonian $S^1$-space, but illustrates the $\Z_2$-invariant spheres on $\tM$.

The first 7 classes in \eqref{neg curves} form a basis of the Picard group and therefore the $\Z_2$-action induces the identity map on homology. All 10 classes in \eqref{neg curves}
minimally generate the effective cone (see, e.g., \cite[Proposition 5.2.1.10]{coxrings}).
Substituting the vector  $2\pi(1;\frac{1}{2},\frac{1}{4},\frac{1}{4},\frac{1}{4},\frac{3}{16},\frac{1}{8})$ as the pairing of $\tilde{\Omega}$ with $(L;E_1,\ldots,E_6)$, we verify that $\tilde{\Omega}^2> 0$ and that $\la \tilde{\Omega},[C] \ra > 0$ for each of these negative curves and hence for every irreducible curve. Therefore, by Nakai's criterion, $\tilde{\Omega}$ contains a K\"ahler form. By averaging with respect to the holomorphic $\Z_{2}$-action, (and since the action is identity on homology), we obtain an invariant K\"ahler form $\tilde{\omega}$ in $\tilde{\Omega}$. Note that the form $\tilde{\omega}$ is with a standard canonical class. Hence, by Corollary \ref{cord}, the symplectic form $\tilde{\omega}$ is a blowup form on  $\tilde{M}=M_{6}$.

Hence we obtain a homologically trivial symplectic $\Z_{2}$-action on $M_6$ with a blowup form $\omega_{1;\frac{1}{2},\frac{1}{4},\frac{1}{4},\frac{1}{4},\frac{3}{16},\frac{1}{8}}$. The adjunction formula implies that the $\Z_2$-fixed spheres in $E_1-E_2$, $E_5-E_6$, and $L-E_3-E_4$ are all embedded.

On $(M_5, \ow_{1;\frac{1}{2},\frac{1}{4},\frac{1}{4},\frac{1}{4},\frac{3}{16}})$ with the Hamiltonian circle action associated with the decorated graph on the left of Figure \ref{special}, we can instead perform an $S^1$-equivariant symplectic blowup to obtain the decorated graph shown on the right of Figure \ref{special}. This shows that $(M_6, \omega_{1;\frac{1}{2},\frac{1}{4},\frac{1}{4},\frac{1}{4},\frac{3}{16},\frac{1}{8}})$ admits a Hamiltonian $S^1$-action.

\end{noTitle}


\bigskip

\begin{noTitle}{\bf We claim that there is no Hamiltonian circle action that extends the constructed ${\Z}_{2}$-action.}
Assume that there is such a Hamiltonian circle action.

Note that the vector $(1;\frac{1}{2},\frac{1}{4},\frac{1}{4},\frac{1}{4},\frac{3}{16},\frac{1}{8})$ is reduced. By Theorem \ref{character}, this circle action on $(M_6,\omega_{1;\frac{1}{2},\frac{1}{4},\frac{1}{4},\frac{1}{4},\frac{3}{16},\frac{1}{8}})$ is obtained from a Hamiltonian circle action on $(\CP^2 \# \overline{\CP^2},\omega_{1;\frac{1}{2}})$ by performing five equivariant symplectic blowups of sizes $1/4$, $1/4$, $1/4$, $3/16$, $1/8$ in this order.
The possible decorated graphs for $(\CP^2 \# \overline{\CP^2},\omega_{1;\frac{1}{2}})$, up to the equivalence relation of flips and vertical translations, are the graphs in Figures \ref{graph-hirz-1} and \ref{graph-hirz-2} with $\ell=n=1$, since $1 \leq \ell < \lambda / (\lambda-\delta_1)= 1/(1-1/2)$. The possible decorated graphs for the $S^1$-equivariant symplectic blowups of the prescribed sizes are shown in Figures
\ref{blowup1-1a}--\ref{blowup4-4Rb}. At each stage, the color blue indicates insufficient size to carry out the next blowup in line (cf. Remark \ref{blowup-size}).

For the decorated graphs in Figure \ref{graph-hirz-1} and on the left of Figure \ref{graph-hirz-2},
the possible $S^1$-equivariant symplectic blowups that produce $E_2$ of size $1/4$ are shown in Figures \ref{blowup1-1a}, \ref{blowup1-1b}, \ref{blowup1-2}, and \ref{blowup1-3}. The possible $S^1$-equivariant symplectic blowups that produce $E_3$ of size $1/4$ subsequently are shown in Figures \ref{blowup2-1a}, \ref{blowup2-1b}, \ref{blowup2-2}, and \ref{blowup2-3}.
The $S^1$-equivariant symplectic blowups that produce $E_4$ of size $1/4$ next can no longer be carried out on these decorated graphs due to size conditions.

For the decorated graph on the right of Figure \ref{graph-hirz-2}, there are only two ways to perform three $S^1$-equivariant symplectic blowups that produce $E_2$, $E_3$, and $E_4$ of size $1/4$: their decorated graphs are shown in Figure \ref{blowup3-4}. The possible $S^1$-equivariant symplectic blowups that produce $E_5$ of size $3/16$ subsequently are shown in Figures \ref{blowup4-4La}--\ref{blowup4-4Rb}.

\begin{Remark}
	The following graphs can be interchanged by a change of compatible metric (or almost complex structure): the last two in Figure \ref{blowup1-3}, \ref{blowup2-3}, or \ref{blowup4-4Rb}; the two in Figure \ref{blowup4-4La} or \ref{blowup4-4Ra};  the three in Figure \ref{blowup4-4Lc}.
\end{Remark}

By Lemma \ref{clsta}, for every $S^1$-invariant compatible almost complex structure $J$, the embedded spheres of non-trivial stabilizers are $J$-holomorphic; hence the fat vertices and the edges of label greater than $1$ in the decorated graphs
correspond to $J$-holomorphic spheres.

After the last $S^1$-equivariant symplectic blowup that produces $E_6$ of size $1/8$, we will get a $J$-holomorphic sphere in one of the following classes:
\begin{itemize}
	\item $E_5$\,;
	\item $E_1-E_i-E_5$ for $i=2,\,3,\,4$\,;
	\item $E_1-E_i-E_6$ for $i=2,\,3,\,4,\,5$\,;
	\item $L-E_i-E_j-E_k$ with $i,\, j,\, k \in \{ 2,\, 3,\, 4,\, 5 \}$ and $i < j < k$\,;
	\item $L-E_i-E_j-E_k-E_\ell$ with $i,\, j,\, k,\, \ell \in \{ 2,\, 3,\, 4,\, 5,\, 6 \}$ and $i < j < k < \ell$.
\end{itemize}

By the construction of the ${\Z_2}$-action and the assumption that the circle action extends the $\Z_2$-action, there are $\Z_2$-fixed embedded spheres in $E_1-E_2$, $L-E_3-E_4$, and $E_5-E_6$.  By Lemma \ref{clsta} again, there are $J$-holomorphic spheres in these classes. Note that
\begin{itemize}
	\item $E_5 \cdot (E_5-E_6)=-1$\,;
	\item $(E_1-E_2-E_5) \cdot (E_1-E_2)=-2$, and $(E_1-E_{i}-E_5) \cdot (E_1-E_2)=-1$ for $i=3,\,4$\,;
	\item $(E_1-E_2-E_6) \cdot (E_1-E_2)=-2$,  and
$(E_1-E_{i}-E_6) \cdot (E_1-E_2)=-1$ for $i=3,\,4,\,5$\,;
\item $(L-E_2-E_j-E_k) \cdot (E_1 - E_2) =-1$, where $j,\, k \in \{ 3,\, 4,\, 5 \}$ and $ j < k$, and
$(L-E_3-E_4-E_5) \cdot (L-E_3-E_4) = -1$\,;
\item $(L-E_2-E_j-E_k-E_\ell) \cdot (E_1 - E_2) =-1$, where $j,\, k,\, \ell \in \{ 3,\, 4,\, 5,\, 6 \}$ and $j < k < \ell$, and
$(L-E_3-E_4-E_5-E_6) \cdot (L-E_3-E_4) = -1$.
\end{itemize}
In each of the above cases we have a contradiction to the positivity of intersections of $J$-holomorphic spheres \cite[Appendix E and Proposition 2.4.4]{nsmall}.

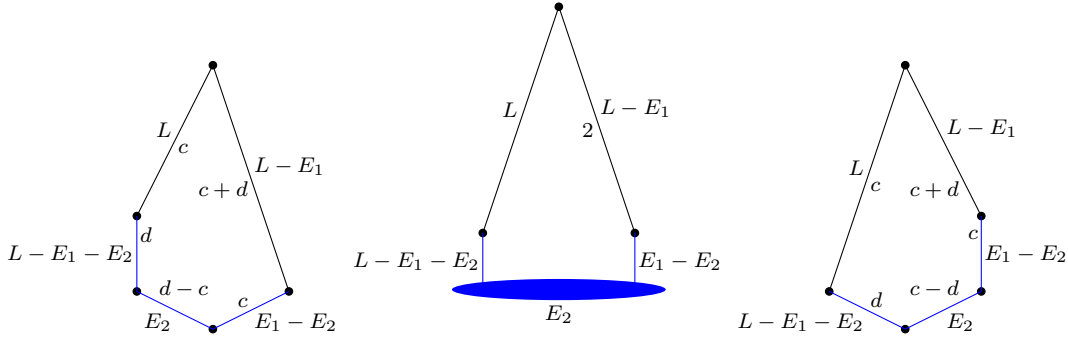
\begin{figure}[h]
\begin{tikzpicture}
	[
		>=stealth,
		point/.style = {draw, circle,  fill = black, inner sep = 1pt},
		every edge quotes/.style = {font=\tiny, color = black, auto=left, inner sep = 1pt},
	]
		\draw node at (0,1) [point]{};
		\draw node at (0,2) [point]{};
		\draw node at (1,4) [point]{};
		\draw node at (2,1) [point]{};
		\draw node at (1,0.5) [point]{};
		\draw
		(0, 1) edge ["$L-E_1-E_2$", "$d$"' near end, blue] (0, 2)
		(0, 2) edge ["$L$", "$c$"'] (1,4)
		(1, 4) edge ["$L-E_1$", "$c+d$"']	(2,1)
		(2,1) edge ["$E_1-E_2$", "$c$"', blue] (1,0.5)
		(1, 0.5) edge ["$E_2$", "$d-c$"' near end, blue] (0,1);
\end{tikzpicture}	
\begin{tikzpicture}
	[
		>=stealth,
		point/.style = {draw, circle,  fill = black, inner sep = 1pt},
		every edge quotes/.style = {font=\tiny, color = black, auto=left, inner sep = 1pt},
	]
	\fill[blue] (0,0.75) ellipse [x radius=40pt, y radius=4pt] node[below=2pt]{\color{black}\tiny $E_2$};
		\draw node at (-1,1.5) [point]{};
		\draw node at (0,4.5) [point]{};
		\draw node at (1,1.5) [point]{};
		\draw
		(-1, 0.75) edge ["$L-E_1-E_2$", blue] (-1, 1.5)
		(-1, 1.5) edge ["$L$"] (0,4.5)
		(0,4.5) edge ["$L-E_1$", "$2$"'] (1,1.5)
		(1,1.5) edge ["$E_1-E_2$", blue] (1,0.75);
\end{tikzpicture}	
\begin{tikzpicture}
[
	>=stealth,
	point/.style = {draw, circle,  fill = black, inner sep = 1pt},
	every edge quotes/.style = {font=\tiny, color = black, auto=left, inner sep=1pt},
]
	\draw node at (0,1) [point]{};
	\draw node at (1,4) [point]{};
	\draw node at (2,2) [point]{};
	\draw node at (2,1) [point]{};
	\draw node at (1,0.5) [point]{};
	\draw
	(0,1) edge ["$L$", "$c$"'] (1,4)
	(1,4) edge ["$L-E_1$", "$c+d$"' near end] (2,2)
	(2,2) edge ["$E_1-E_2$", "$c$"' near start, blue] (2,1)
	(2,1) edge ["$E_2$", "$c-d$"' near start, blue] (1,0.5)
	(1,0.5) edge ["$L-E_1-E_2$", "$d$"', blue] (0,1); 	
\end{tikzpicture}		
\caption{Blowup ($E_2$) at the minimum of the left graph of Figure \ref{graph-hirz-1} with $c<d$, $c=d=1$, $c>d$. }\labell{blowup1-1a}
\end{figure}


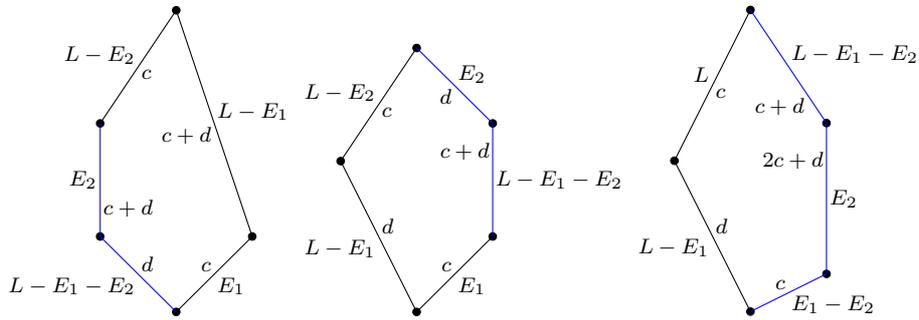
\begin{figure}[h]
\begin{tikzpicture}
[
	>=stealth,
	point/.style = {draw, circle,  fill = black, inner sep = 1pt},
	every edge quotes/.style = {font=\tiny, color = black, auto=left, inner sep = 1pt},
]
	\draw node at (0,1) [point]{};
	\draw node at (0,2.5) [point]{};
	\draw node at (1,4) [point]{};
	\draw node at (2,1) [point]{};
	\draw node at (1,0) [point]{};
	\draw
	(0, 1) edge ["$E_2$", "$c+d$"' near start, blue] (0, 2.5)
	(0, 2.5) edge ["$L-E_2$", "$c$"'] (1,4)
	(1,4) edge ["$L-E_1$", "$c+d$"'] (2,1)
	(2,1) edge ["$E_1$", "$c$"'] (1,0)
	(1,0) edge ["$L-E_1-E_2$", "$d$"', blue](0,1); 	
\end{tikzpicture}	
\begin{tikzpicture}
[
	>=stealth,
	point/.style = {draw, circle,  fill = black, inner sep = 1pt},
	every edge quotes/.style = {font=\tiny, color = black, auto=left, inner sep=1pt},
]
	\draw node at (0,2) [point]{};
	\draw node at (1,3.5) [point]{};
	\draw node at (2,2.5) [point]{};
	\draw node at (2,1) [point]{};
	\draw node at (1,0) [point]{};
	\draw
	(0, 2) edge ["$L-E_2$", "$c$"'] (1,3.5)
	(1,3.5) edge ["$E_2$", "$d$"', blue] (2,2.5)
	(2,2.5) edge ["$L-E_1-E_2$", "$c+d$"' near start, blue] (2,1)
	(2,1) edge ["$E_1$", "$c$"'] (1,0)
	(1,0) edge ["$L-E_1$", "$d$"'] (0,2); 	
\end{tikzpicture}	
\begin{tikzpicture}
[
	>=stealth,
	point/.style = {draw, circle,  fill = black, inner sep = 1pt},
	every edge quotes/.style = {font=\tiny, color = black, auto=left, inner sep = 1pt},
]
	\draw node at (0,2) [point]{};
	\draw node at (1,4) [point]{};
	\draw node at (2,2.5) [point]{};
	\draw node at (2,0.5) [point]{};
	\draw node at (1,0) [point]{};
	\draw
	(0,2) edge ["$L$", "$c$"'] (1,4)
	(1,4) edge ["$L-E_1-E_2$", "$c+d$"' near end, blue] (2,2.5)
	(2,2.5) edge ["$E_2$", "$2c+d$"' near start, blue] (2,0.5)
	(2,0.5) edge ["$E_1-E_2$", "$c$"' , blue] (1,0)
	(1,0) edge ["$L-E_1$", "$d$"' ] (0,2); 	
\end{tikzpicture}	
\caption{Blowup ($E_2$) at a non-minimal fixed point of the left graph of Figure \ref{graph-hirz-1}.}\labell{blowup1-1b}
\end{figure}


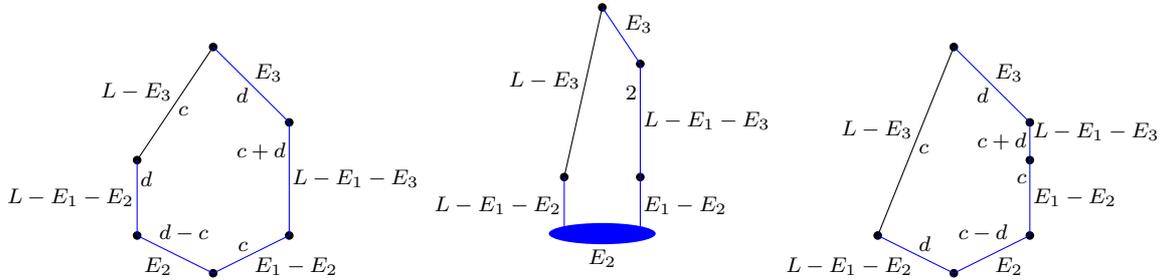
\begin{figure}[h]
\begin{tikzpicture}
	[
		>=stealth,
		point/.style = {draw, circle,  fill = black, inner sep = 1pt},
		every edge quotes/.style = {font=\tiny, color = black, auto=left, inner sep = 1pt},
	]
		\draw node at (0,1) [point]{};
		\draw node at (0,2) [point]{};
		\draw node at (1,3.5) [point]{};
		\draw node at (2,2.5) [point]{};
		\draw node at (2,1) [point]{};
		\draw node at (1,0.5) [point]{};
		\draw
		(0, 1) edge ["$L-E_1-E_2$", "$d$"' near end, blue] (0, 2)
		(0,2) edge ["$L-E_3$", "$c$"'] (1,3.5)
		(1,3.5) edge ["$E_3$", "$d$"', blue] (2,2.5)
		(2,2.5) edge ["$L-E_1-E_3$", "$c+d$"' near start, blue] (2,1)	
		(2,1) edge ["$E_1-E_2$", "$c$"', blue] (1,0.5)
		(1,0.5) edge ["$E_2$", "$d-c$"' near end, blue] (0,1); 	
\end{tikzpicture}	
\begin{tikzpicture}
	[
		>=stealth,
		point/.style = {draw, circle,  fill = black, inner sep = 1pt},
		every edge quotes/.style = {font=\tiny, color = black, auto=left, inner sep = 1pt},
	]
	\fill[blue] (0,0.75) ellipse [x radius=20pt, y radius=4pt] node[below=2pt]{\color{black}\tiny $E_2$};
		\draw node at (-0.5,1.5) [point]{};
		\draw node at (0,3.75) [point]{};
		\draw node at (0.5,3) [point]{};
		\draw node at (0.5,1.5) [point]{};
		\draw
		(-0.5, 0.75) edge ["$L-E_1-E_2$", blue] (-0.5, 1.5)
		(-0.5,1.5) edge ["$L-E_3$"] (0,3.75)
		(0,3.75) edge ["$E_3$", blue] (0.5,3)
		(0.5,3) edge ["$L-E_1-E_3$", "$2$"' near start, blue] (0.5,1.5)
		(0.5, 1.5) edge ["$E_1-E_2$", blue] (0.5, 0.75);
\end{tikzpicture}	
\begin{tikzpicture}
[
	>=stealth,
	point/.style = {draw, circle,  fill = black, inner sep = 1pt},
	every edge quotes/.style = {font=\tiny, color = black, auto=left, inner sep = 1pt},
]
	\draw node at (0,1) [point]{};
	\draw node at (1,3.5) [point]{};
	\draw node at (2,2.5) [point]{};
	\draw node at (2,2) [point]{};
	\draw node at (2,1) [point]{};
	\draw node at (1,0.5) [point]{};
	\draw
	(0, 1) edge ["$L-E_3$", "$c$"' ] (1,3.5)
	(1,3.5) edge ["$E_3$", "$d$"' , blue] (2,2.5)
	(2,2.5) edge ["$L-E_1-E_3$" near start, "$c+d$"', blue] (2,2)
	(2,2) edge ["$E_1-E_2$", "$c$"' near start, blue] (2,1)
	(2,1) edge ["$E_2$", "$c-d$"' near start, blue] (1,0.5)
	(1,0.5) edge ["$L-E_1-E_2$", "$d$"' , blue] (0,1); 	
\end{tikzpicture}		
\caption{Blowup ($E_3$) at the maximum from Figure \ref{blowup1-1a}. Blowup ($E_4$) can not be performed afterward due to size consideration.}\labell{blowup2-1a}
\end{figure}


\begin{figure}[h]
\begin{tikzpicture}
	[
		>=stealth,
		point/.style = {draw, circle,  fill = black, inner sep = 1pt},
		every edge quotes/.style = {font=\tiny, color = black, auto=left, inner sep = 1pt},
	]
		\draw node at (0,1) [point]{};
		\draw node at (0,2.5) [point]{};
		\draw node at (1,4) [point]{};
		\draw node at (2,2.5) [point]{};
		\draw node at (2,0.5) [point]{};
		\draw node at (1,0) [point]{};
		\draw
		(0, 1) edge ["$E_2$", "$c+d$"' , blue] (0, 2.5)
		(0,2.5) edge ["$L-E_2$", "$c$"' ] (1,4)
		(1,4) edge ["$L-E_1-E_3$", "$c+d$"' near end, blue] (2,2.5)
		(2,2.5) edge ["$E_3$", "$2c+d$"' , blue] (2,0.5)	
		(2, 0.5) edge ["$E_1-E_3$", "$c$"' , blue] (1,0)
		(1,0) edge ["$L-E_1-E_2$", "$d$"' , blue] (0,1); 	
\end{tikzpicture}
\begin{tikzpicture}
	[
		>=stealth,
		point/.style = {draw, circle,  fill = black, inner sep = 1pt},
		every edge quotes/.style = {font=\tiny, color = black, auto=left, inner sep = 1pt},
	]
		\draw node at (0,1) [point]{};
		\draw node at (0,2.5) [point]{};
		\draw node at (1,3.5) [point]{};
		\draw node at (2,2.5) [point]{};
		\draw node at (2,1) [point]{};
		\draw node at (1,0) [point]{};
		\draw
		(0, 1) edge ["$E_2$", "$c+d$"', blue] (0, 2.5)
		(0,2.5) edge ["$L-E_2-E_3$", "$c$"'] (1,3.5)
		(1,3.5) edge ["$E_3$", "$d$"', blue] (2,2.5)
		(2,2.5) edge ["$L-E_1-E_3$", "$c+d$"', blue] (2,1)	
		(2,1) edge ["$E_1$", "$c$"'] (1,0)
		(1,0) edge ["$L-E_1-E_2$", "$d$"', blue] (0,1); 	
\end{tikzpicture}
\caption{Blowup ($E_3$) from Figure \ref{blowup1-1b} at a non-minimal fixed point, or with $E_2$, $E_3$ switched. Blowup ($E_3$) at a minimum from Figure \ref{blowup1-1b} goes back to blowup ($E_3$) from Figure \ref{blowup1-1a} with $E_2$, $E_3$ switched. Again the blowup ($E_4$) can not be performed afterward due to size consideration.}\labell{blowup2-1b}
\end{figure}
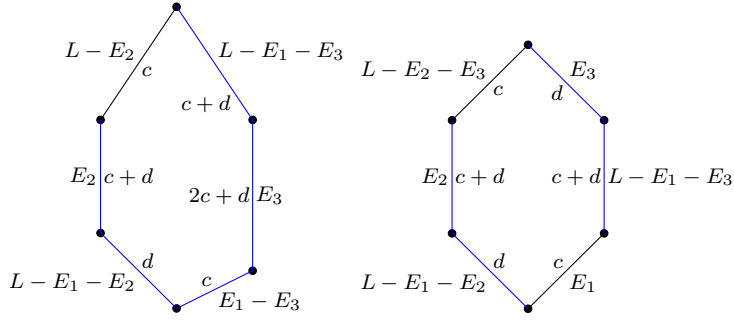

\vfill
\pagebreak

\begin{figure}[h]
\begin{tikzpicture}
	[
		point/.style = {draw, circle,  fill = black, inner sep = 1pt},
		every edge quotes/.style = {font=\tiny, color = black, auto=left, inner sep = 1pt},
	]
		\draw node at (0,0.5) [point]{};
		\draw node at (0,2) [point]{};
		\draw node at (0,3) [point]{};
		\draw node at (2,4) [point]{};
		\draw node at (2,0) [point]{};
		\draw
		(0, 0.5) edge ["$E_2$", "$c+d$"', blue] (0, 2)
		(0, 2) edge ["$E_1-E_2$", "$c$"', blue] (0,3)
		(0,3) edge ["$L-E_1$", "$c-d$"'] (2,4)
		(2,4) edge ["$L$", "$c$"'] (2,0)
		(2,0) edge ["$L-E_1-E_2$", "$d$"', blue] (0,0.5); 	
\end{tikzpicture}		
\begin{tikzpicture}
	[
		point/.style = {draw, circle,  fill = black, inner sep = 1pt},
		every edge quotes/.style = {font=\tiny, color = black, auto=left, inner sep = 1pt},
	]
		\draw node at (0,1) [point]{};
		\draw node at (0,2) [point]{};
		\draw node at (0,3.5) [point]{};
		\draw node at (2,4) [point]{};
		\draw node at (2,0) [point]{};
		\draw
		(0, 1) edge ["$E_1-E_2$", "$c$"', blue] (0, 2)
		(0,2) edge ["$E_2$", "$2c-d$"', blue] (0,3.5)
		(0,3.5) edge ["$L-E_1-E_2$", "$c-d$"', blue] (2,4)
		(2,4) edge ["$L$", "$c$"'] (2,0)
		(2,0) edge ["$L-E_1$", "$d$"'] (0,1); 	
\end{tikzpicture}	
\begin{tikzpicture}
	[
		point/.style = {draw, circle,  fill = black, inner sep = 1pt},
		every edge quotes/.style = {font=\tiny, color = black, auto=left, inner sep = 1pt},
	]
		\draw node at (0,1) [point]{};
		\draw node at (0,3) [point]{};
		\draw node at (1,3.5) [point]{};
		\draw node at (2,3) [point]{};
		\draw node at (1,0) [point]{};
		\draw
		(0, 1) edge ["$E_1$", "$c$"'] (0, 3)
		(0, 3) edge ["$L-E_1-E_2$", "$c-d$"' near start, blue] (1,3.5)
		(1, 3.5) edge ["$E_2$", "$d$"', blue] (2,3)
		(2,3) edge ["$L-E_2$", "$c$"'] (1,0)
		(1,0) edge ["$L-E_1$", "$d$"'] (0,1); 	
\end{tikzpicture}	
\begin{tikzpicture}
	[
		point/.style = {draw, circle,  fill = black, inner sep = 1pt},
		every edge quotes/.style = {font=\tiny, color = black, auto=left, inner sep = 1pt},
	]
		\draw node at (0,1) [point]{};
		\draw node at (0,3) [point]{};
		\draw node at (1,4) [point]{};
		\draw node at (2,1) [point]{};
		\draw node at (1,0.5) [point]{};
		\draw
		(0, 1) edge ["$E_1$", "$c$"'] (0, 3)
		(0, 3) edge ["$L-E_1$", "$c-d$"' near start] (1,4)
		(1, 4) edge ["$L-E_2$", "$c$"'] (2,1)
		(2, 1) edge ["$E_2$", "$c-d$"' near start, blue] (1,0.5)
		(1, 0.5) edge ["$L-E_1-E_2$", "$d$"', blue] (0,1); 	
\end{tikzpicture}	
\caption{Blowup ($E_2$) of the right graph of Figure \ref{graph-hirz-1}. Note $c > c-d \geq 1$ and so $2c-d >1$.}\labell{blowup1-2}
\end{figure}
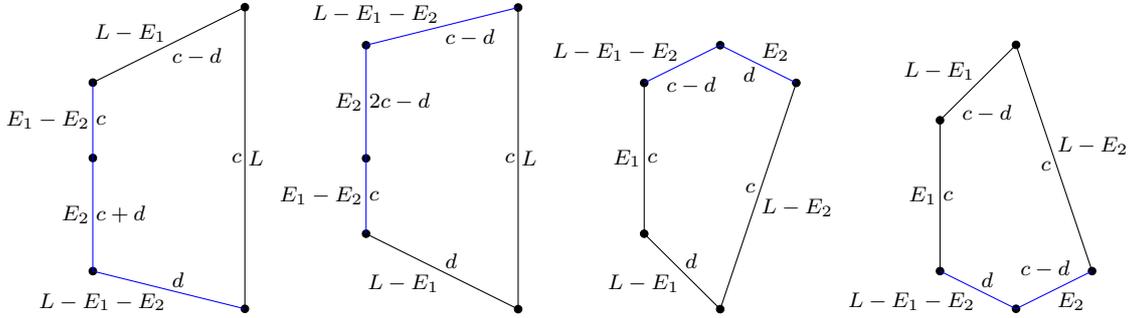


\begin{figure}[h]
\begin{tikzpicture}
	[
		point/.style = {draw, circle,  fill = black, inner sep = 1pt},
		every edge quotes/.style = {font=\tiny, color = black, auto=left, inner sep = 1pt},
	]
		\draw node at (0,0.5) [point]{};
		\draw node at (0,2) [point]{};
		\draw node at (0,3) [point]{};
		\draw node at (1,3.5) [point]{};
		\draw node at (2,3) [point]{};
		\draw node at (1,0) [point]{};
		\draw
		(0, 0.5) edge ["$E_2$", "$c+d$"', blue] (0, 2)
		(0, 2) edge ["$E_1-E_2$", "$c$"', blue] (0,3)
		(0, 3) edge ["$L-E_1-E_3$", "$c-d$"' near start, blue] (1,3.5)
		(1, 3.5) edge ["$E_3$", "$d$"', blue] (2,3)
		(2, 3) edge ["$L-E_3$", "$c$"'] (1,0)
		(1, 0) edge ["$L-E_1-E_2$", "$d$"', blue] (0,0.5); 	
\end{tikzpicture}		
\begin{tikzpicture}
	[
		point/.style = {draw, circle,  fill = black, inner sep = 1pt},
		every edge quotes/.style = {font=\tiny, color = black, auto=left, inner sep = 1pt},
	]
		\draw node at (0,1) [point]{};
		\draw node at (0,2) [point]{};
		\draw node at (0,3.5) [point]{};
		\draw node at (1,4) [point]{};
		\draw node at (2,1) [point]{};
		\draw node at (1,0.5) [point]{};
		\draw
		(0, 1) edge ["$E_1-E_2$", "$c$"', blue] (0, 2)
		(0, 2) edge ["$E_2$", "$2c-d$"', blue] (0,3.5)
		(0, 3.5) edge ["$L-E_1-E_2$", "$c-d$"' near start, blue] (1,4)
		(1, 4) edge ["$L-E_3$", "$c$"'] (2,1)
		(2, 1) edge ["$E_3$", "$c-d$"' near start, blue] (1,0.5)
		(1, 0.5) edge ["$L-E_1-E_3$", "$d$"', blue] (0,1); 	
\end{tikzpicture}	
\begin{tikzpicture}
	[
		point/.style = {draw, circle,  fill = black, inner sep = 1pt},
		every edge quotes/.style = {font=\tiny, color = black, auto=left, inner sep = 1pt},
	]	
		\draw node at (0,1) [point]{};
		\draw node at (0,3) [point]{};
		\draw node at (1,3.5) [point]{};
		\draw node at (2,3) [point]{};
		\draw node at (2,1) [point]{};
		\draw node at (1,0.5) [point]{};
		\draw
		(0, 1) edge ["$E_1$", "$c$"'] (0, 3)
		(0, 3) edge ["$L-E_1-E_2$", "$c-d$"' near start, blue] (1,3.5)
		(1, 3.5) edge ["$E_2$", "$d$"', blue] (2,3)
		(2, 3) edge ["$L-E_2-E_3$", "$c$"'] (2,1)
		(2, 1) edge ["$E_3$", "$c-d$"' near start, blue] (1,0.5)
		(1, 0.5) edge ["$L-E_1-E_3$", "$d$"', blue] (0,1); 	
\end{tikzpicture}	

\caption{Blowup ($E_3$) from Figure \ref{blowup1-2} or with $E_2$, $E_3$ switched. Due to size conditions, the blowup ($E_4$) can not be performed afterward.}\labell{blowup2-2}
\end{figure}
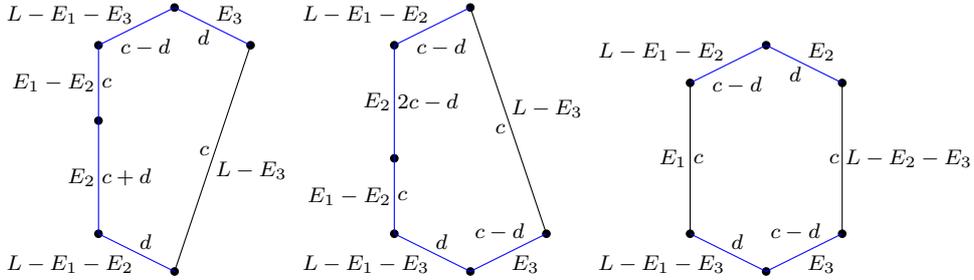


\begin{figure}[h]
\begin{tikzpicture}
[
	>=stealth,
	point/.style = {draw, circle,  fill = black, inner sep = 1pt},
	every edge quotes/.style = {font=\tiny, color = black, auto=left, inner sep = 1pt},
]	
	\draw node at (-0.5,0.5) [point]{};
	\draw node at (-0.5,1.5) [point]{};
	\draw node at (0.5,2) [point]{};
	\fill (0,0) ellipse [x radius=20pt, y radius=4pt] node[below=2pt]{\tiny $L-E_1$};
	\draw
	(-0.5,0) edge ["$E_1-E_2$", blue] (-0.5, 0.5)
	(-0.5, 0.5) edge ["$E_2$", "$2$"', blue] (-0.5,1.5)
	(-0.5, 1.5) edge ["$L-E_1-E_2$", blue] (0.5,2)
	(0.5, 2) edge ["$L$"] (0.5,0);
\end{tikzpicture}
\begin{tikzpicture}
[
	>=stealth,
	point/.style = {draw, circle,  fill = black, inner sep = 1pt},
	every edge quotes/.style = {font=\tiny, color = black, auto=left, inner sep = 1pt},
]	
	\draw node at (-0.5,1) [point]{};
	\fill (0,0) ellipse [x radius=20pt, y radius=4pt] node[below=2pt]{\tiny $L-E_1$};
	\fill[blue] (0,1.5) ellipse [x radius=20pt, y radius=4pt] node[above=2pt]{\color{black}\tiny $E_2$};
	\draw
	(-0.5,0) edge["$E_1$"] (-0.5, 1)
	(-0.5, 1) edge ["$L-E_1-E_2$", blue] (-0.5,1.5);
	\draw
	(0.5,1.5) edge ["$L-E_2$"] (0.5, 0);
\end{tikzpicture}
\begin{tikzpicture}
[
	>=stealth,
	point/.style = {draw, circle,  fill = black, inner sep = 1pt},
	every edge quotes/.style = {font=\tiny, color = black, auto=left, inner sep = 1pt},
]	
	\draw node at (-0.5,0.5) [point]{};
	\draw node at (-0.5,1) [point]{};
	\draw node at (0.5,2) [point]{};
	\fill[blue] (0,0) ellipse [x radius=20pt, y radius=4pt] node[below=2pt]{\color{black}\tiny $L-E_1-E_2$};
	\draw
	(-0.5,0) edge ["$E_2$", blue] (-0.5, 0.5)
	(-0.5, 0.5) edge ["$E_1-E_2$", blue] (-0.5,1)
	(-0.5, 1) edge ["$L-E_1$"] (0.5,2)
	(0.5, 2) edge ["$L$"] (0.5,0);
\end{tikzpicture}
\begin{tikzpicture}
[
	>=stealth,
	point/.style = {draw, circle,  fill = black, inner sep = 1pt},
	every edge quotes/.style = {font=\tiny, color = black, auto=left, inner sep = 1pt},
]	
	\draw node at (-0.5,1) [point]{};
	\draw node at (0.5,0.5) [point]{};
	\draw node at (0.5,2) [point]{};
	\fill[blue] (0,0) ellipse [x radius=20pt, y radius=4pt] node[below=2pt]{\color{black}\tiny $L-E_1-E_2$};
	\draw
	(-0.5,0) edge ["$E_1$"] (-0.5, 1)
	(-0.5, 1) edge ["$L-E_1$"] (0.5,2)
	(0.5, 2) edge ["$L-E_2$"] (0.5,0.5)
	(0.5, 0.5) edge ["$E_2$", blue] (0.5,0);
\end{tikzpicture}
\caption{Blowup ($E_2$) of the left graph of Figure \ref{graph-hirz-2}.
}\labell{blowup1-3}
\end{figure}
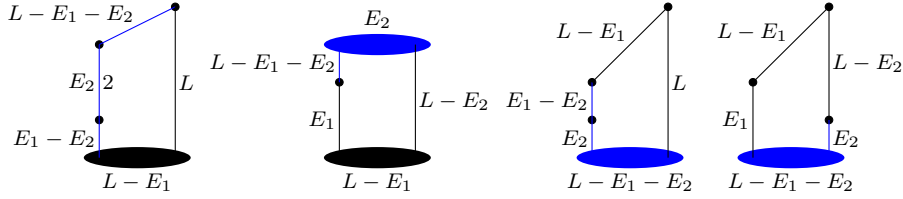

\vfill
\pagebreak


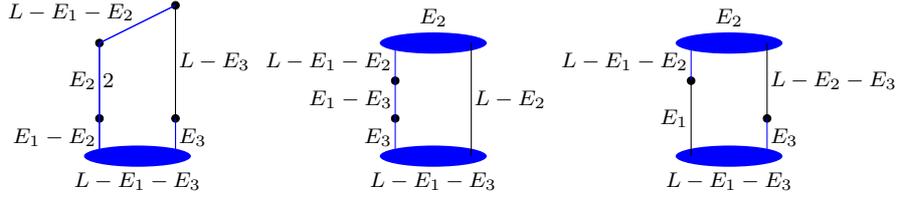
\begin{figure}[h]
\begin{tikzpicture}
[
	>=stealth,
	point/.style = {draw, circle,  fill = black, inner sep = 1pt},
	every edge quotes/.style = {font=\tiny, color = black, auto=left, inner sep = 1pt},
]	
	\draw node at (-0.5,0.5) [point]{};
	\draw node at (-0.5,1.5) [point]{};
	\draw node at (0.5,2) [point]{};
	\draw node at (0.5,0.5) [point]{};
	\fill[blue] (0,0) ellipse [x radius=20pt, y radius=4pt] node[below=2pt]{\color{black}\tiny $L-E_1-E_3$};
	\draw
	(-0.5,0) edge ["$E_1-E_2$", blue] (-0.5, 0.5)
	(-0.5,0.5) edge ["$E_2$", "$2$"', blue] (-0.5,1.5)
	(-0.5, 1.5) edge ["$L-E_1-E_2$", blue] (0.5,2)
	(0.5, 2) edge ["$L-E_3$"] (0.5,0.5)
	(0.5, 0.5) edge ["$E_3$", blue] (0.5,0);
	\draw[blue] (-0.5,0) -- (-0.5,0.5) -- (-0.5,1.5) -- (0.5,2) (0.5,0.5)--(0.5,0);
\end{tikzpicture}
\begin{tikzpicture}
[
	>=stealth,
	point/.style = {draw, circle,  fill = black, inner sep = 1pt},
	every edge quotes/.style = {font=\tiny, color = black, auto=left, inner sep = 1pt},
]	
	\draw node at (-0.5,0.5) [point]{};
	\draw node at (-0.5,1) [point]{};
	\fill[blue] (0,0) ellipse [x radius=20pt, y radius=4pt] node[below=2pt]{\color{black}\tiny $L-E_1-E_3$};
	\fill[blue] (0,1.5) ellipse [x radius=20pt, y radius=4pt] node[above=2pt]{\color{black}\tiny $E_2$};
	\draw
	(-0.5,0) edge ["$E_3$", blue] (-0.5, 0.5)
	(-0.5, 0.5) edge ["$E_1-E_3$", blue] (-0.5,1)
	(-0.5, 1) edge ["$L-E_1-E_2$", blue] (-0.5,1.5);
	\draw
	(0.5,1.5) edge ["$L-E_2$"] (0.5,0);
\end{tikzpicture}
\begin{tikzpicture}
[
	>=stealth,
	point/.style = {draw, circle,  fill = black, inner sep = 1pt},
	every edge quotes/.style = {font=\tiny, color = black, auto=left, inner sep = 1pt},
]	
	\draw node at (-0.5,1) [point]{};
	\draw node at (0.5,0.5) [point]{};
	\fill[blue] (0,0) ellipse [x radius=20pt, y radius=4pt] node[below=2pt]{\color{black}\tiny $L-E_1-E_3$};
	\fill[blue] (0,1.5) ellipse [x radius=20pt, y radius=4pt] node[above=2pt]{\color{black}\tiny $E_2$};
	\draw
	(-0.5,0) edge ["$E_1$"] (-0.5, 1)
	(-0.5, 1) edge ["$L-E_1-E_2$", blue] (-0.5,1.5);
	\draw
	(0.5,0) edge ["$E_3$"', blue] (0.5,0.5)
	(0.5, 0.5) edge ["$L-E_2-E_3$"'] (0.5,1.5);
\end{tikzpicture}
\caption{Blowup ($E_3$) from Figure \ref{blowup1-3} or with $E_2$, $E_3$ switched. The blowup ($E_4$) can not be performed afterward due to size condition.}\labell{blowup2-3}
\end{figure}


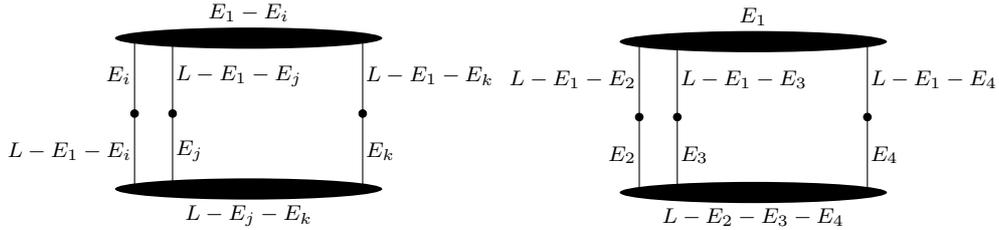
\begin{figure}[h]
\begin{tikzpicture}
[
	>=stealth,
	point/.style = {draw, circle,  fill = black, inner sep = 1pt},
	every edge quotes/.style = {font=\tiny, color = black, auto=left, inner sep = 1pt},
]	
	\draw node at (-1.5,1) [point]{};
	\draw node at (-1,1) [point]{};
	\draw node at (1.5,1) [point]{};
	\fill (0,0) ellipse [x radius=50pt, y radius=4pt] node[below=2pt]{\tiny $L-E_j-E_k$};
	\fill (0,2) ellipse [x radius=50pt, y radius=4pt] node[above=2pt]{\tiny $E_1-E_i$};
	\draw
	(-1.5,0) edge ["$L-E_1-E_i$"] (-1.5, 1)
	(-1.5, 1) edge ["$E_i$"] (-1.5,2);
	\draw
	(-1,0) edge ["$E_j$"'] (-1, 1)
	(-1, 1) edge ["$L-E_1-E_j$"'] (-1,2);
	\draw
	(1.5,0) edge ["$E_k$"'] (1.5, 1)
	(1.5,1) edge ["$L-E_1-E_k$"'] (1.5,2);
\end{tikzpicture}
\begin{tikzpicture}
[
	>=stealth,
	point/.style = {draw, circle,  fill = black, inner sep = 1pt},
	every edge quotes/.style = {font=\tiny, color = black, auto=left, inner sep = 1pt},
]	
	\draw node at (-1.5,1) [point]{};
	\draw node at (-1,1) [point]{};
	\draw node at (1.5,1) [point]{};
	\fill (0,0) ellipse [x radius=50pt, y radius=4pt] node[below=2pt]{\tiny $L-E_2-E_3-E_4$};
	\fill (0,2) ellipse [x radius=50pt, y radius=4pt] node[above=2pt]{\tiny $E_1$};
	\draw
	(-1.5,0) edge ["$E_2$"] (-1.5, 1)
	(-1.5, 1) edge ["$L-E_1-E_2$"] (-1.5,2);
	\draw
	(-1,0) edge ["$E_3$"'] (-1, 1)
	(-1, 1) edge ["$L-E_1-E_3$"'] (-1,2);
	\draw
	(1.5,0) edge ["$E_4$"'] (1.5, 1)
	(1.5, 1) edge ["$L-E_1-E_4$"'] (1.5,2);
\end{tikzpicture}
\caption{Three blowups of the right graph of Figure \ref{graph-hirz-2} with $\{i, j, k\} = \{2, 3, 4\}$. Due to size conditions, at most one blowup can be performed at the maximal surface. None of these blowups can be performed at an interior fixed point created by a previous blowup.}\labell{blowup3-4}
\end{figure}


\begin{figure}[h]
\begin{tikzpicture}
[
	>=stealth,
	point/.style = {draw, circle,  fill = black, inner sep = 1pt},
]	
	\fill[blue] (0,2) ellipse [x radius=40pt, y radius=4pt] node[above=2pt]{\color{black}\tiny $E_1-E_i-E_5$};
	\draw[blue] (-1, 1.7) -- (-1,2);
	\draw (0, 1.7) -- (0,2);
	\draw (1, 1.7) -- (1,2);
\end{tikzpicture}	
\hspace{1cm}
\begin{tikzpicture}
[
	>=stealth,
	point/.style = {draw, circle,  fill = black, inner sep = 1pt},
]	
	\fill[blue] (0,2) ellipse [x radius=40pt, y radius=4pt] node[above=2pt]{\color{black}\tiny $E_1-E_i-E_5$};
	\draw (-1, 1.7) -- (-1,2);
	\draw (-0.5, 1.7) -- (-0.5,2);
	\draw (0.5, 1.7) -- (0.5,2);	
	\draw (1, 1.7) -- (1,2);
\end{tikzpicture}	
\caption{	
If the blowup ($E_5$) is performed on the maximal surface of the left graph of Figure \ref{blowup3-4}, then $E_1-E_i-E_5$, $i \in \{2,3,4\}$, remains after the next blowup ($E_6$) because of size consideration.
}\labell{blowup4-4La}
\end{figure}
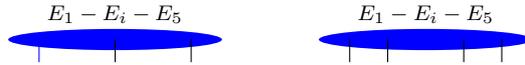

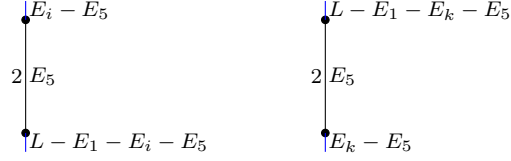
\begin{figure}[h]
\begin{tikzpicture}
[
	>=stealth,
	point/.style = {draw, circle,  fill = black, inner sep = 1pt},
	every edge quotes/.style = {font=\tiny, color = black, auto=left, inner sep = 1pt},
]	
	\draw node at (0,0.25) [point]{};
	\draw node at (0,1.75) [point]{};
	\draw
	(0,0) edge ["$L-E_1-E_i - E_5$"', blue] (0,0.25)
	(0, 0.25) edge ["$E_5$"', "$2$"] (0,1.75)
	(0, 1.75) edge ["$E_i-E_5$"', blue](0,2);
\end{tikzpicture}		
\hspace{1cm}
\begin{tikzpicture}
[
	>=stealth,
	point/.style = {draw, circle,  fill = black, inner sep = 1pt},
	every edge quotes/.style = {font=\tiny, color = black, auto=left, inner sep = 1pt},
]	
	\draw node at (0,0.25) [point]{};
	\draw node at (0,1.75) [point]{};
	\draw (0,0) edge ["$E_k - E_5$"', blue] (0,0.25)
	(0, 0.25) edge ["$E_5$"', "$2$"] (0,1.75)
	(0, 1.75) edge ["$L-E_1-E_k-E_5$"', blue] (0,2);
\end{tikzpicture}	
\caption{	
If the blowup ($E_5$) is performed at an interior fixed point of the left graph of Figure \ref{blowup3-4}, then $E_5$ remains after the next blowup ($E_6$) due to size consideration. Here $i, k \in \{2,3,4\}$.}\labell{blowup4-4Lb}
\end{figure}

\vfill
\pagebreak

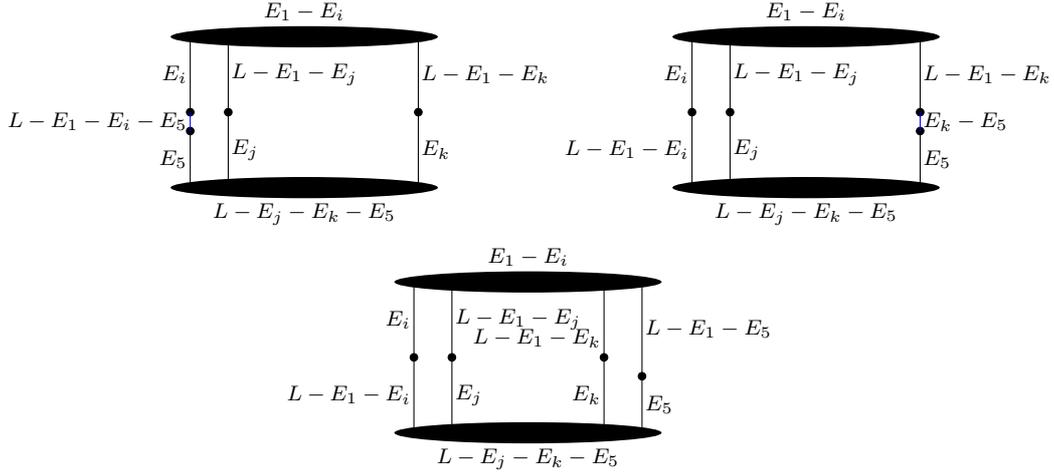
\begin{figure}[h]
\begin{tikzpicture}
[
	>=stealth,
	point/.style = {draw, circle,  fill = black, inner sep = 1pt},
	every edge quotes/.style = {font=\tiny, color = black, auto=left, inner sep = 1pt},
]	
	\draw node at (-1.5,0.75) [point]{};
	\draw node at (-1.5,1) [point]{};
	\draw node at (-1,1) [point]{};
	\draw node at (1.5,1) [point]{};
	\fill (0,0) ellipse [x radius=50pt, y radius=4pt] node[below=2pt]{\tiny $L-E_j-E_k-E_5$};
	\fill (0,2) ellipse [x radius=50pt, y radius=4pt] node[above=2pt]{\tiny $E_1-E_i$};
	\draw
	(-1.5,0) edge ["$E_5$"] (-1.5, 0.75)
	(-1.5, 0.75) edge ["$L-E_1-E_i-E_5$", blue] (-1.5, 1)
	(-1.5, 1) edge ["$E_i$"] (-1.5,2);
	\draw
	(-1,0) edge ["$E_j$"'] (-1, 1)
	(-1,1) edge ["$L-E_1-E_j$"'] (-1,2);
	\draw
	(1.5,0) edge ["$E_k$"'] (1.5, 1)
	(1.5, 1) edge ["$L-E_1-E_k$"'] (1.5,2);
\end{tikzpicture}
\begin{tikzpicture}
[
	>=stealth,
	point/.style = {draw, circle,  fill = black, inner sep = 1pt},
	every edge quotes/.style = {font=\tiny, color = black, auto=left, inner sep = 1pt},
]	
	\draw node at (-1.5,1) [point]{};
	\draw node at (-1,1) [point]{};
	\draw node at (1.5,0.75) [point]{};
	\draw node at (1.5,1) [point]{};
	\fill (0,0) ellipse [x radius=50pt, y radius=4pt] node[below=2pt]{\tiny $L-E_j-E_k-E_5$};
	\fill (0,2) ellipse [x radius=50pt, y radius=4pt] node[above=2pt]{\tiny $E_1-E_i$};
	\draw
	(-1.5,0) edge ["$L-E_1-E_i$"] (-1.5, 1)
	(-1.5, 1) edge ["$E_i$"] (-1.5,2);
	\draw
	(-1,0) edge ["$E_j$"'] (-1, 1)
	(-1, 1) edge ["$L-E_1-E_j$"'] (-1,2);
	\draw
	(1.5,0) edge ["$E_5$"'] (1.5, 0.75)
	(1.5, 0.75) edge ["$E_k-E_5$"', blue] (1.5, 1)
	(1.5, 1) edge ["$L-E_1-E_k$"'] (1.5,2);
\end{tikzpicture}
\begin{tikzpicture}
[
	>=stealth,
	point/.style = {draw, circle,  fill = black, inner sep = 1pt},
	every edge quotes/.style = {font=\tiny, color = black, auto=left, inner sep = 1pt},
]	
	\draw node at (-1.5,1) [point]{};
	\draw node at (-1,1) [point]{};
	\draw node at (1,1) [point]{};
	\draw node at (1.5,0.75) [point]{};
	\fill (0,0) ellipse [x radius=50pt, y radius=4pt] node[below=2pt]{\tiny $L-E_j-E_k-E_5$};
	\fill (0,2) ellipse [x radius=50pt, y radius=4pt] node[above=2pt]{\tiny $E_1-E_i$};
	\draw
	(-1.5,0) edge ["$L-E_1-E_i$"] (-1.5, 1)
	(-1.5, 1) edge ["$E_i$"] (-1.5,2);
	\draw
	(-1,0) edge ["$E_j$"'] (-1, 1)
	(-1, 1) edge ["$L-E_1-E_j$"'] (-1,2);
	\draw
	(1,0) edge ["$E_k$"] (1, 1)
	(1,1) edge ["$L-E_1-E_k$" near start] (1,2);
	\draw
	(1.5,0) edge ["$E_5$"'] (1.5, 0.75)
	(1.5, 0.75) edge ["$L-E_1-E_5$"'] (1.5,2);
\end{tikzpicture}
\caption{Blowup ($E_5$) on the minimal surface of the left graph of Figure \ref{blowup3-4} where $\{i, j, k\} = \{2, 3, 4\}$. If the next blowup ($E_6$) is performed on the minimal surface, it produces $L-E_j-E_k-E_5-E_6$. If the next blowup ($E_6$) is performed on the maximal surface, it produces $E_1-E_i-E_6$. If the next blowup ($E_6$) is performed at an interior fixed point, the minimal surface remains as $L-E_j-E_k-E_5$. 
}\labell{blowup4-4Lc}
\end{figure}

\begin{figure}[h]
\begin{tikzpicture}
[
	>=stealth,
	point/.style = {draw, circle,  fill = black, inner sep = 1pt},
	every edge quotes/.style = {font=\tiny, color = black, auto=left, inner sep = 1pt},
]	
	\draw node at (-1,1) [point]{};
	\draw node at (0,1) [point]{};
	\draw node at (1,1) [point]{};
	\draw node at (1,1.25) [point]{};
	\fill (0,0) ellipse [x radius=40pt, y radius=4pt] node[below=2pt]{\tiny $L-E_2-E_3-E_4$};
	\fill (0,2) ellipse [x radius=40pt, y radius=4pt] node[above=2pt]{\tiny $E_1-E_5$};
	\draw (-1,0) -- 
	(-1, 1) -- 
	(-1,2);
	\draw (0,0) -- 
	(0, 1) -- 
	(0,2);
	\draw (1,0) edge ["$E_k$"'] (1, 1)
	(1, 1) edge ["$L-E_1-E_k-E_5$"', blue] (1, 1.25)
	(1, 1.25) edge ["$E_5$"'] (1,2);
\end{tikzpicture}
\begin{tikzpicture}
[
	>=stealth,
	point/.style = {draw, circle,  fill = black, inner sep = 1pt},
	every edge quotes/.style = {font=\tiny, color = black, auto=left, inner sep = 1pt},
]	
	\draw node at (-1,1) [point]{};
	\draw node at (-0.5,1) [point]{};
	\draw node at (0.5,1) [point]{};
	\draw node at (1,1.25) [point]{};
	\fill (0,0) ellipse [x radius=40pt, y radius=4pt] node[below=2pt]{\tiny $L-E_2-E_3-E_4$};
	\fill (0,2) ellipse [x radius=40pt, y radius=4pt] node[above=2pt]{\tiny $E_1-E_5$};
	\draw (-1,0) -- (-1, 1)
	-- (-1,2);
	\draw (-0.5,0) -- 
	(-0.5, 1) -- 
	(-0.5,2);
	\draw (0.5,0) -- 
	(0.5, 1) -- 
	(0.5,2);
	\draw (1,0) edge ["$L-E_1-E_5$"'] (1, 1.25)
	(1, 1.25) edge ["$E_5$"'] (1,2);
\end{tikzpicture}
\caption{Blowup ($E_5$) on the maximal surface of the right graph of Figure \ref{blowup3-4} where $k \in \{2, 3, 4\}$. If the next blowup ($E_6$) is performed on the maximal surface, it produces $E_1-E_5-E_6$. If the next blowup ($E_6$) is performed on the minimal surface, it produces $L-E_2-E_3-E_4-E_6$. If the next blowup ($E_6$) is performed at an interior fixed point, the minimal surface remains as $L-E_2-E_3-E_4$.
}\labell{blowup4-4Ra}
\end{figure}
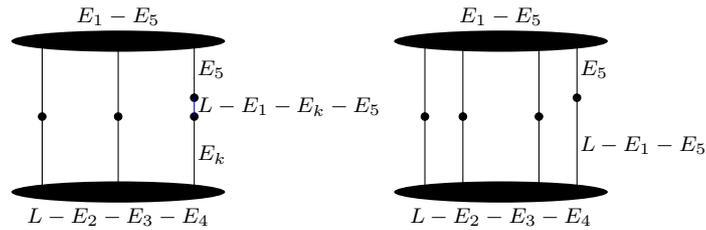


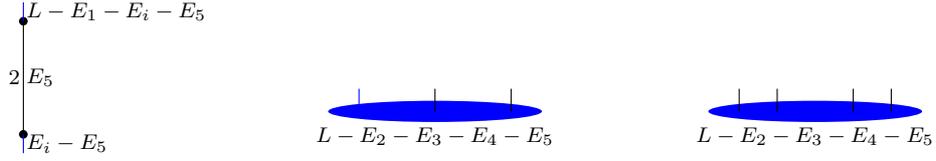
\begin{figure}[h]
\begin{tikzpicture}
[
	>=stealth,
	point/.style = {draw, circle,  fill = black, inner sep = 1pt},
	every edge quotes/.style = {font=\tiny, color = black, auto=left, inner sep = 1pt},
]	
	\draw node at (0,0.25) [point]{};
	\draw node at (0,1.75) [point]{};
	\draw
	(0,0) edge ["$E_i-E_5$"', blue] (0, 0.25)
	(0,0.25) edge ["$E_5$"', "$2$"] (0, 1.75)
	(0,1.75) edge ["$L-E_1-E_i-E_5$"', blue] (0,2);
\end{tikzpicture}
\hspace{1cm}
\begin{tikzpicture}
[
	>=stealth,
	point/.style = {draw, circle,  fill = black, inner sep = 1pt},
]	
	\fill[blue] (0,0) ellipse [x radius=40pt, y radius=4pt] node[below=2pt]{\color{black}\tiny $L-E_2-E_3-E_4-E_5$};
	\draw[blue] (-1, 0) -- (-1,0.3);
	\draw (0, 0) -- (0,0.3);
	\draw (1, 0) -- (1,0.3);

	\fill[blue] (5,0) ellipse [x radius=40pt, y radius=4pt] node[below=2pt]{\color{black}\tiny $L-E_2-E_3-E_4-E_5$};
	\draw (4,0) -- (4,0.3);
	\draw (4.5, 0) -- (4.5,0.3);
	\draw (5.5, 0) -- (5.5,0.3);	
	\draw (6, 0) -- (6,0.3);
\end{tikzpicture}	
\caption{If the blowup ($E_5$) is performed at an interior fixed point of the right graph of Figure \ref{blowup3-4} where $i \in \{2, 3, 4\}$, then $E_5$ remains after the next blowup $(E_6)$ due to size conditions. If the blowup ($E_5$) is performed on the minimal surface of the right graph of Figure \ref{blowup3-4}, then $L-E_2-E_3-E_4-E_5$ remains after the next blowup ($E_6$) due to size conditions.}\labell{blowup4-4Rb}
\end{figure}

\vfill
\pagebreak
\end{noTitle}

\begin{Remark}\labell{ex1b}
There is another homologically trivial $\Z_{2}$-action that does not extend to a Hamiltonian circle action on $M_6$ with a blowup form in the cohomology class $\tilde{\Omega}$: start from the Hamiltonian $S^1$-space with the decorated graph on the right of Figure \ref{blowup3-4}, perform an $S^1$-equivariant blowup at an interior fixed point to get a $\Z_{2}$-sphere in $E_5$; and then perform a $\Z_{2}$-equivariant complex blowup at a point in this $\Z_{2}$-sphere that is not an $S^1$-fixed point; the resulting complex manifold is again a weak del Pezzo surface of degree $3$ and type $A_4$; its $(-2)$- and $(-1)$-curves are in the classes
$L-E_1-E_4-E_5$, $E_5-E_6$, $E_4-E_5$, $L-E_2-E_3-E_4$, $E_6$, $E_1$, $E_2$, $E_3$, $L-E_1-E_2$, $L-E_1-E_3$, with embedded $\Z_2$-fixed spheres in $E_1$, $E_5-E_6$ and $L-E_2-E_3-E_4$ (see Figure \ref{config-ex1b}); the rest is done by similar arguments.
\end{Remark}

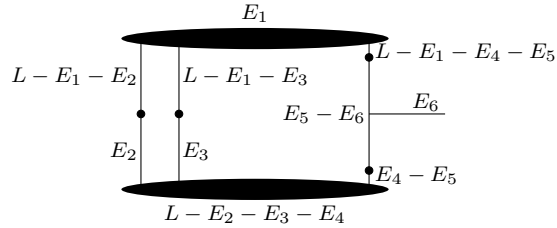
\begin{figure}[h]
\begin{tikzpicture}
[
	point/.style = {draw, circle,  fill = black, inner sep = 1pt},
	every edge quotes/.style = {font=\tiny, color = black, auto=left, inner sep = 1pt},
	loosestart/.style = {inner sep= 2pt, near start},
	looseend/.style = {inner sep= 2pt, near end},
]	 	
	\fill (0,0) ellipse [x radius=50pt, y radius=4pt] node[below=2pt]{\tiny $L-E_2-E_3-E_4$};
	\fill (0,2) ellipse [x radius=50pt, y radius=4pt] node[above=2pt]{\tiny $E_1$};	
	\draw node at (-1.5,1) [point]{};
	\draw node at (-1,1) [point]{};
	\draw node at (1.5,0.25) [point]{};
	\draw node at (1.5,1.75) [point]{};
	\draw
	(-1.5,0) edge ["$E_2$"] (-1.5,1)
	(-1.5,1) edge ["$L-E_1-E_2$"] (-1.5,2);
	\draw
	(-1,0) edge ["$E_3$"'] (-1,1)
	(-1,1) edge ["$L-E_1-E_3$"'] (-1,2);
	\draw
	(1.5,0) edge ["$E_4-E_5$"' looseend] (1.5,0.25)
	(1.5,0.25) edge ["$E_5-E_6$"] (1.5,1.75)
	(1.5,1.75) edge ["$L-E_1-E_4-E_5$"' loosestart] (1.5,2);
	\draw (1.5,1) edge ["$E_6$" near end] (2.5,1);
\end{tikzpicture}	
\caption{Configuration for Remark \ref{ex1b}.}\labell{config-ex1b}
\end{figure}

\section{The example in the non-simply connected case} \label{ex-nonsimply}

\begin{noTitle}\labell{ex-nonsc}
\textbf{We construct a ${\Z}_{2}$-action on $W_{3}$ with a blowup form $\omega_{1,1;\frac{3}{5},\frac{7}{20},\frac{3}{10}}$} as follows:
Start from the Hamiltonian $S^1$-action on $(\Sigma \times S^2,\omega_{1,1})$ whose decorated graph is in Figure \ref{graph-ruled} with $\ell=0$.
Perform an $S^1$-equivariant symplectic blowup of size $\frac{3}{5}$ at a point in the minimal fixed surface, followed by an $S^1$-equivariant blowup of size $\frac{7}{20}$ at the obtained interior fixed point, to get a ${\Z}_{2}$-sphere in $E_2$. 
We get a Hamiltonian circle action on $M=W_{2}$ with a blowup form $\omega=\omega_{1,1;\frac{3}{5}, \frac{7}{20}}$, whose decorated graph is shown on the left of Figure \ref{fig-genusp-action}.
\begin{figure}[h]
\begin{tikzpicture}
[
	>=stealth,
	point/.style = {draw, circle,  fill = black, inner sep = 1pt},
	every edge quotes/.style = {font=\tiny, color = black, auto=left, inner sep = 1pt},
	loosestart/.style = {inner sep= 5pt, near start},
]	
	\draw node at (-0.5,0.5) [point]{};
	\draw node at (-0.5,1.9) [point]{};
	\fill (0,0) ellipse [x radius=20pt, y radius=2pt] node[below=2pt]{\color{black}\tiny $B-E_1$};
	\fill (0,2) ellipse [x radius=20pt, y radius=2pt] node[above=2pt]{\color{black}\tiny $B$};
	\draw
	(-0.5,0) edge ["$E_1-E_2$", blue] (-0.5, 0.5)
	(-0.5, 0.5) edge ["$E_2$", "$2$"'] (-0.5,1.9)
	(-0.5, 1.9) edge ["$F-E_1-E_2$" loosestart, blue] (-0.5,2);
	\draw (0.5,2) edge ["$F$"] (0.5,0);
\end{tikzpicture}
\hspace{1cm}
\begin{tikzpicture}
[
	>=stealth,
	point/.style = {draw, circle,  fill = black, inner sep = 1pt},
	every edge quotes/.style = {font=\tiny, color = black, auto=left, inner sep = 1pt},
	loosestart/.style = {inner sep= 5pt, near start},
]	
	\draw node at (-0.5,0.5) [point]{};
	\draw node at (-0.5,1.9) [point]{};
	\draw node at (0.5,1.4) [point]{};
	\fill (0,0) ellipse [x radius=20pt, y radius=2pt] node[below=2pt]{\color{black}\tiny $B-E_1$};
	\fill (0,2) ellipse [x radius=20pt, y radius=2pt] node[above=2pt]{\color{black}\tiny $B-E_3$};
	\draw
	(-0.5,0) edge ["$E_1-E_2$"] (-0.5, 0.5)
	(-0.5, 0.5) edge ["$E_2$", "$2$"'] (-0.5,1.9)
	(-0.5, 1.9) edge ["$F-E_1-E_2$" loosestart] (-0.5,2);
	\draw (0.5,2) edge ["$E_3$"] (0.5,1.4)
	(0.5, 1.4) edge ["$F-E_3$"] (0.5,0);
\end{tikzpicture}
\caption{Special decorated graphs.}\labell{fig-genusp-action}
\end{figure}
By \cite[Theorem 7.1]{karshon}, $(M,\omega)$ admits an integrable complex structure  
such that the $S^1$-action is holomorphic and the symplectic form is K\"ahler. 
Now perform a ${\Z}_{2}$-equivariant complex blowup $$\pi \colon \tilde{M} \to M$$ at a point $p$ in the ${\Z}_{2}$-sphere in $E_2$ that is not an $S^1$-fixed point. 
Let $\tilde{E_3}=\pi^{-1}(p)$ be the exceptional divisor and $\Xi_{3}$ be its Poincar\'e dual in $H^{2}(\tilde{M})$. Let
$$
\tilde{\Omega}:=\pi^{*}\Omega-\frac{3}{10} \Xi_{3},
$$
where $\Omega$ is the cohomology class of $\omega$.
We claim that $\tilde{\Omega}$ contains a K\"ahler form. 
As before,
by Nakai's criterion, and since $\tilde{\Omega}^2 
> 0$, it is enough to show that $\la \tilde{\Omega},[C]\ra > 0$ for every complex curve $C \subset \tilde{M}$. This follows from the following claim.

\begin{Claim} \labell{generators}
Consider the classes 
\[
	F-E_1-E_2,\ E_2-E_3,\ E_3,\ E_1-E_2,\ B-E_1.
	\] 
	\begin{enumeratei}
	\item \labell{part1} For each of these classes, its pairing with $\tilde{\Omega}$ is positive. 
	\item \labell{part2} Each of the classes contains a ${\Z}_{2}$-invariant complex curve. 
		\item \labell{part3} The classes generate the effective cone of complex curves in $\tilde{M}$.
       \end{enumeratei}
\end{Claim}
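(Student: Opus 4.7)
The plan is to verify the three parts in sequence, with (iii) the substantive step. Part (i) is a direct pairing: substituting the encoding vector $(1,1;\tfrac{3}{5},\tfrac{7}{20},\tfrac{3}{10})$ into $\tfrac{1}{2\pi}\langle\tilde{\Omega},\cdot\rangle$ yields the values $\tfrac{1}{20}$, $\tfrac{1}{20}$, $\tfrac{3}{10}$, $\tfrac{1}{4}$, $\tfrac{2}{5}$ on the five classes respectively, all strictly positive.

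For (ii), I would exhibit an explicit $\Z_2$-invariant complex curve in each class. By construction, the exceptional divisor of $\pi$ represents $E_3$, and the proper transform of the $\Z_2$-sphere in $E_2\subset M$ (which contains $p$ as a smooth point) represents $E_2-E_3$. The remaining classes $F-E_1-E_2$, $E_1-E_2$, and $B-E_1$ are already visible in the decorated graph on the left of Figure~\ref{fig-genusp-action} as $S^1$-invariant (hence $\Z_2$-invariant) complex curves in $M$, namely the two invariant edges of the $S^1$-chain that do not carry the $\Z_2$-label and the minimum surface; each is disjoint from $p$ (which lies strictly inside $E_2$), so each pulls back to $\tilde{M}$ in the same class.

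Part (iii) is the main technical step. The plan is to use the natural fibration $\tilde{\pi}\colon\tilde{M}\to\Sigma$ obtained by composing the three blowdowns with the projection $\Sigma\times S^2\to\Sigma$. An irreducible curve $C\subset\tilde{M}$ is either vertical or horizontal. If $C$ is vertical, then either $C$ is a general fiber (with class $F=(F-E_1-E_2)+(E_1-E_2)+2(E_2-E_3)+2E_3$) or $C$ is one of the four components of the unique reducible fiber above the image of $q_1$ (with class $F-E_1-E_2$, $E_1-E_2$, $E_2-E_3$, or $E_3$); in every case $[C]$ lies in the non-negative cone on the five generators. If $C$ is horizontal, write $[C]=dB+bF-\mu_1 E_1-\mu_2 E_2-\mu_3 E_3$ with $d=C\cdot F\ge 1$, $b=C\cdot B\ge 0$, and $\mu_i\ge 0$ the multiplicities at the three (infinitely near) blowup centers. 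Assuming $C$ is not one of the five generators, positivity of intersection of $C$ with each of the five invariant curves from (ii) forces $\mu_1\le b$, $\mu_1+\mu_2\le d$, and $\mu_1\ge\mu_2\ge\mu_3\ge 0$. Expressing $[C]$ in the basis $v_1=B-E_1,\ v_2=F-E_1-E_2,\ v_3=E_1-E_2,\ v_4=E_2-E_3,\ v_5=E_3$ yields coefficients $a_1=d$, $a_2=b$, $a_3=d+b-\mu_1$, $a_4=d+2b-\mu_1-\mu_2$, $a_5=d+2b-\mu_1-\mu_2-\mu_3$, and the inequalities above force all $a_i\ge 0$; the tightest check, $a_5\ge 0$, follows from $\mu_1+\mu_2+\mu_3\le d+\mu_1\le d+b$.

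The anticipated obstacle is the careful bookkeeping of the multiplicities $\mu_i$ under the iterated blowup (since $q_2$ and $p$ are infinitely near $q_1$) and the verification that no additional negative curves on $\tilde{M}$ contribute extremal rays outside the five listed generators. Both should follow from the explicit geometry of the construction, where $p$ lies strictly in the interior of the $\Z_2$-sphere $E_2\subset M$, disjoint from every other $S^1$-invariant curve.
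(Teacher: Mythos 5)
Your proposal is correct, and parts \itemref{part1}--\itemref{part2} coincide with the paper's treatment (which dispatches them in one line each, as following from the definition of $\tilde{\Omega}$ and from the blowup construction). For part \itemref{part3}, however, you take a genuinely different route. The paper first quotes the fact that $F-E_1-E_2,\ E_2,\ E_1-E_2,\ B-E_1$ generate the effective cone of the intermediate surface $M=W_2$ (citing \cite[Proposition C.6 and Theorem 7.1]{karshon}), writes an irreducible curve of $\tilde{M}$ other than $E_3$ as the proper transform $\tilde{D}=D-mE_3$ of a curve $D=a(F-E_1-E_2)+bE_2+c(E_1-E_2)+d(B-E_1)$ with $a,b,c,d\geq 0$, and reduces everything to the single inequality $b\geq m$; this is proved by the descending multiplicity chain $m=\mult_{p}D\leq D\cdot E_2=\mult_{p_2}\pi_2(D)\leq \pi_2(D)\cdot E_1\leq \pi_1\pi_2(D)\cdot B\leq a$ combined with $0\leq D\cdot(F-E_1-E_2)=b-2a$. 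You instead argue entirely on $\tilde{M}$: you split irreducible curves into vertical and horizontal ones for the ruling $\tilde{M}\to\Sigma$, enumerate the components of the unique reducible fiber, and in the horizontal case extract all five coefficient inequalities directly from positivity of intersections of $C$ with the five generator curves themselves (your inequalities $\mu_1\leq b$, $\mu_1+\mu_2\leq d$, $\mu_1\geq\mu_2\geq\mu_3\geq 0$ and the resulting bounds on $a_1,\dots,a_5$ all check out). Your argument is self-contained: it re-proves, rather than quotes, the generation of the effective cone one blowup down --- essentially carrying out the ``inductive argument similar to the computation below'' that the paper mentions as an alternative --- at the cost of a slightly longer case analysis; the paper's version is more economical given the cited input, since it isolates the one new inequality created by the third blowup.
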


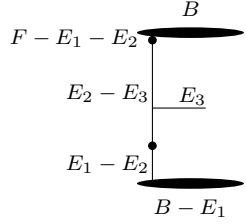
\begin{figure}[h]
\begin{tikzpicture}
[
	>=stealth,
	point/.style = {draw, circle,  fill = black, inner sep = 1pt},
	every edge quotes/.style = {font=\tiny, color = black, auto=left, inner sep = 1pt},
	loosestart/.style = {inner sep= 5pt, near start},
]	
	\draw node at (-0.5,0.5) [point]{};
	\draw node at (-0.5,1.9) [point]{};
	\fill (0,0) ellipse [x radius=20pt, y radius=2pt] node[below=2pt]{\color{black}\tiny $B-E_1$};
	\fill (0,2) ellipse [x radius=20pt, y radius=2pt] node[above=2pt]{\color{black}\tiny $B$};
	\draw
	(-0.5,0) edge ["$E_1-E_2$"] (-0.5, 0.5)
	(-0.5, 0.5) edge ["$E_2-E_3$"] (-0.5,1.9)
	(-0.5, 1.9) edge ["$F-E_1-E_2$" loosestart] (-0.5,2);
	\draw (-0.5,1) edge ["$E_3$" near end] (0.2,1);
\end{tikzpicture}
\caption{Special configuration.}\labell{countex2}
\end{figure}

\begin{proof}
\itemref{part1} follows from the definition of $\tilde{\Omega}$. \itemref{part2} follows from the blowup construction.
For \itemref{part3}, we only need to verify that the proper transform of an irreducible curve on $M$, that passes through the point $p$ with multiplicity $m$, represents a non-negative combination of these classes.  
Recall that the classes
\[
F-E_1-E_2,\ E_2,\ E_1-E_2,\ B-E_1.
\]
generate the effective cone in $M$ (before the third blowup), and each is represented by an $S^1$-invariant curve.
This follows from \cite[Proposition C.6 and 
Theorem 7.1]{karshon}, or an inductive argument similar to the computation below. 
Hence the proper transform can be expressed as 
\[ 
\begin{split}
\tilde{D} &= D - mE_3 \\
&= a(F-E_1-E_2)+b E_2+c (E_1-E_2)+d (B-E_1)-mE_3 \\
&=a(F-E_1-E_2)+b (E_2-E_3)+(b-m)E_3+ c (E_1-E_2)+d (B-E_1)
\end{split}
\] with $a, b, c, d \geq 0$. 
It remains to show that $b-m \geq 0$.   

Denote by $\pi_1$ and $\pi_2$ the first and second blowup maps at $p_1$ and $p_2$ in the construction of $\tilde{M}$. We have  
\[
\begin{split}
m = \tilde{D} \cdot E_3 &= \mult_{p} D \\
&\leq D \cdot E_2 = \mult_{p_2} \pi_2(D) \\
&\leq \pi_2(D)\cdot E_1 = \mult_{p_1} \pi_1(\pi_2(D))\\ 
&\leq \pi_1\pi_2(D)\cdot B  = (aF+dB) \cdot B \leq a.
\end{split}
\]
Also, by positivity of intersections of complex curves in $M$,
\[
0 \leq D \cdot (F-E_1-E_2) = -2a + b. 
\]
Combining these two inequalities, we have $m \leq b$.
\end{proof}

The configuration of the generating classes is illustrated in Figure \ref{countex2}. Again note that it is not a decorated graph for a Hamiltonian circle action.

By averaging with respect to the holomorphic ${\Z}_{2}$-action, we get an invariant K\"ahler form $\tilde{\omega}$ on $\tilde{M}$ in $\tilde{\Omega}$. 
By \cite[Lemma 3.5 Part 2]{Li-Liu} and \cite[Theorem A]{liliu-ruled}, since the Chern class of $\tilde{\omega}$ is the same as that of a blowup form, it is a blowup form. 
This yields a homologically trivial symplectic ${\Z}_{2}$-action on $(\tilde{M},\tilde{\omega})=(W_{3}, \omega_{1,1;\frac{3}{5},\frac{7}{20},\frac{3}{10}})$. 
The adjunction formula implies that the ${\Z}_{2}$-fixed sphere in $E_2-E_3$ is embedded.

Note that the manifold $(W_{3}, \omega_{1,1;\frac{3}{5},\frac{7}{20},\frac{3}{10}})$ does admit a Hamiltonian circle action, whose decorated graph is shown on the right of Figure \ref{fig-genusp-action}.

\end{noTitle}

\begin{noTitle}
\textbf{We claim that there is no Hamiltonian circle action that extends the constructed ${\Z}_{2}$-action.}
Note that the vector $(1,1;\frac{3}{5},\frac{7}{20},\frac{3}{10})$ is reduced. By Theorem \ref{charpositive},
 an extending Hamiltonian  circle action would have been obtained from a Hamiltonian circle action on $(\Sigma \times S^2,\omega_{1,1})$ by $S^1$-equivariant symplectic blowups of sizes  ${3}/{5},{7}/{20},{3}/{10}$, in this order. 
There is exactly one Hamiltonian $S^1$-action on $(\Sigma \times S^2,\omega_{1,1})$, whose decorated graph is shown in Figure \ref{graph-ruled} with $\ell=0$. 
Therefore, a Hamiltonian $S^1$-action on 
 $(W_{3}, \omega_{1,1;\frac{3}{5},\frac{7}{20},\frac{3}{10}})$
 is obtained from this action by one of the following sequences of equivariant blowups of sizes ${3}/{5},{7}/{20},{3}/{10}$ (up to a change of compatible metric (or almost complex structure)).

\begin{description}
\item[Type I] two blowups at one fixed surface and one at the other (performed simultaneously); 
\item[Type II] a blowup at a fixed surface, followed by a second blowup at the interior fixed point created at the first blowup (to get a ${\Z}_{2}$-sphere in $E_2$), and a third blowup at one of the fixed surfaces;
\item[Type III] the first and the second blowups are each at a fixed surface (either the same one, or different ones), followed by a blowup at the interior fixed point created at the first blowup (note that the second and the third blowups can be performed simultaneously);
\item[Type IV] the first and the second blowups are each at a fixed surface (either the same one, or different ones), followed by a blowup at the interior fixed point created at the second blowup.
\end{description}

The other options are ruled out due to size restrictions. In particular, if the second blowup is at the interior fixed point created at the first blowup, obtaining a ${\Z}_{2}$-sphere in $E_2$, then one cannot perform an $S^1$-blowup of size $\frac{3}{10}$ at one of its poles, as shown on the left of Figure \ref{fig-genusp-action} (by the blue edges).

By the proof of \cite[Theorem 7.1]{karshon}, for each type of the Hamiltonian $S^1$-actions, the complex structure $\overline{J}$ obtained from the product complex structure on $\Sigma \times S^2$ by $S^1$-equivariant complex blowups at the same points, in the same order, is compatible with the symplectic form. 
Note that an exceptional divisor is a $\overline{J}$-holomorphic sphere, and so is the proper transform of the exceptional divisor of a previous blowup. By Lemma \ref{clsta}, a ${\Z}_{2}$-sphere is also $\overline{J}$-holomorphic. In particular, we will have 
\begin{itemize}
	\item a $\overline{J}$-holomorphic sphere in $E_2$ in Types I, II, and III; 
	\item a  $\overline{J}$-holomorphic sphere in $E_2-E_3$ that is not fixed by ${\Z}_{2}$ 	in Type IV (as shown in Figure \ref{type4} or its upside-down). 
\end{itemize}

\begin{figure}[h]
\begin{tikzpicture}
[
	>=stealth,
	point/.style = {draw, circle,  fill = black, inner sep = 1pt},
	every edge quotes/.style = {font=\tiny, color = black, auto=left, inner sep = 1pt},
	loosestart/.style = {inner sep= 5pt, near start},
]	
	\draw node at (-0.5,1.2) [point]{};
	\draw node at (0.5,0.1) [point]{};
	\draw node at (0.5,1.3) [point]{};
	\fill (0,0) ellipse [x radius=20pt, y radius=2pt] node[below=2pt]{\color{black}\tiny $B-E_1-E_2$};
	\fill (0,2) ellipse [x radius=20pt, y radius=2pt] node[above=2pt]{\color{black}\tiny $B$};
	\draw
	(-0.5,0) edge ["$E_1$"] (-0.5, 1.2)
	(-0.5, 1.2) edge ["$F-E_1$"] (-0.5,2);
	\draw 
	(0.5,2) edge ["$F-E_2-E_3$"] (0.5,1.3)
	(0.5,1.3) edge ["$E_3$", "$2$"'] (0.5,0.1)
	(0.5, 0.1) edge ["$E_2-E_3$" loosestart] (0.5,0);
\end{tikzpicture}
\hspace{1cm}
\begin{tikzpicture}
[
	>=stealth,
	point/.style = {draw, circle,  fill = black, inner sep = 1pt},
	every edge quotes/.style = {font=\tiny, color = black, auto=left, inner sep = 1pt},
	loosestart/.style = {inner sep= 5pt, near start},
]	
	\draw node at (-0.5,1.2) [point]{};
	\draw node at (0.5,0.7) [point]{};
	\draw node at (0.5,1.9) [point]{};
	\fill (0,0) ellipse [x radius=20pt, y radius=2pt] node[below=2pt]{\color{black}\tiny $B-E_1$};
	\fill (0,2) ellipse [x radius=20pt, y radius=2pt] node[above=2pt]{\color{black}\tiny $B-E_2$};
	\draw
	(-0.5,0) edge ["$E_1$"] (-0.5, 1.2)
	(-0.5, 1.2) edge ["$F-E_1$"] (-0.5,2);
	\draw 
	(0.5,2) edge ["$E_2-E_3$" loosestart] (0.5,1.9)
	(0.5, 1.9) edge ["$E_3$", "$2$"'] (0.5,0.7)
	(0.5, 0.7) edge ["$F-E_2-E_3$"] (0.5,0);
\end{tikzpicture}
\caption{Decorated graphs of Type IV.}\labell{type4}
\end{figure}
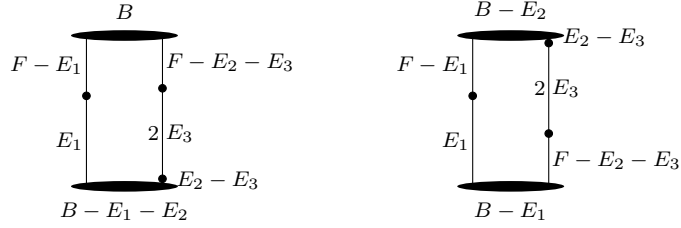

 However, by the construction of the ${\Z}_{2}$-action and the assumption that the Hamiltonian circle action extends the ${\Z}_{2}$-action, there is a ${\Z}_{2}$-fixed embedded sphere in $E_2-E_3$.
 By Lemma \ref{clsta}, this sphere is  $\overline{J}$-holomorphic. 
 Its intersection with a different $\overline{J}$-holomorphic sphere in $E_2-E_3$ (e.g. one that is not ${\Z}_{2}$-fixed), or with a $\overline{J}$-holomorphic sphere in $E_2$ is negative. Therefore, an extending Hamiltonian circle action of each of the four types would have yielded a contradiction to positivity of intersections of holomorphic curves.

\end{noTitle}

\begin{Remark}
The above construction can be generalized to produce a symplectic $\Z_r$-action on $W_{r+1}$ 
with a blowup form, that does not extend to a Hamiltonian circle action, for an even integer $r > 2$.
Start from the same Hamiltonian $S^1$-action on $(\Sigma \times S^2,\omega_{1,1})$ as before. 
Perform a sequence of $r$ $S^1$-equivariant symplectic blowups so that the
resulting decorated graph is as the one illustrated on the left of Figure \ref{fig-genusp-2} for $r=4$.  
Next perform a ${\Z}_{r}$-equivariant complex blowup at a point in the $\Z_r$-sphere in $E_r$ that is not an $S^1$-fixed point.
The resulting configuration of $\Z_r$-invariant complex curves on $W_{r+1}$ is illustrated in Figure \ref{countex2-2} for $r=4$. 
Assume that the blowup sizes $\delta_1,\ldots,\delta_r$ and $\delta_{r+1}$
 satisfy certain conditions; for example, we take $\delta_i= \frac{2^{2r-i}+1}{2^{2r}} \text{ for }1 \leq i \leq r$, and $\delta_{r+1}=\frac{1}{2^{r}}$.   
 Since $r$ is even, $\Z_2 < \Z_r$. Similar arguments as before show that we obtain a $\Z_r$-invariant blowup form $\omega=\ow_{1,1;\delta_1, \dots, \delta_{r+1}}$ on $W_{r+1}$, and that 
 for any invariant $\omega$-compatible almost complex structure $J$
 there are $J$-holomorphic spheres, $\Z_r$-fixed in $E_r-E_{r+1}$ and $\Z_2$-fixed in $E_{2i}-E_{2i+1}$ for $1 \leq i < \frac{r}{2}$.
The symplectic manifold $(W_{r+1},\omega)$ does admit a Hamiltonian $S^1$-action, as shown on the right of Figure \ref{fig-genusp-2} for $r=4$.
By a  careful analysis on the $J$-holomorphic spheres that can occur under a Hamiltonian $S^1$-action, we can show that an extending Hamiltonian $S^1$-action would lead to a violation of positivity of intersections.

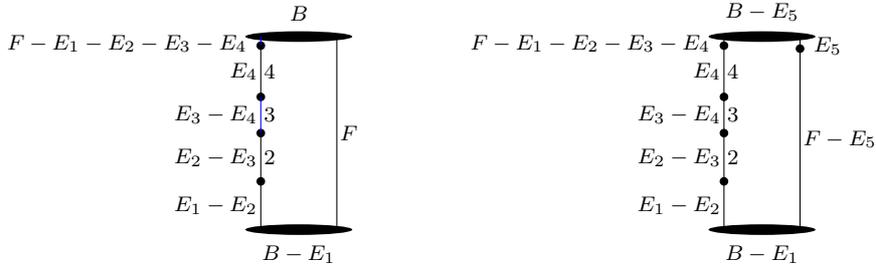
\begin{figure}[h]
\begin{tikzpicture}
[
	>=stealth,
	point/.style = {draw, circle,  fill = black, inner sep = 1pt},
	every edge quotes/.style = {font=\tiny, color = black, auto=left, inner sep = 1pt},
	loosestart/.style = {inner sep= 5pt, near start},
]	
	\draw node at (-0.5,0.64) [point]{};
	\draw node at (-0.5,1.28) [point]{};
	\draw node at (-0.5,1.76) [point]{};
	\draw node at (-0.5,2.44) [point]{};
	\fill (0,0) ellipse [x radius=20pt, y radius=2pt] node[below=2pt]{\color{black}\tiny $B-E_1$};
	\fill (0,2.56) ellipse [x radius=20pt, y radius=2pt] node[above=2pt]{\color{black}\tiny $B$};
	\draw
	(-0.5,0) edge ["$E_1-E_2$"] (-0.5, 0.64)
	(-0.5, 0.64) edge ["$E_2-E_3$", "$2$"'] (-0.5,1.28)
	(-0.5, 1.28) edge ["$E_3-E_4$", "$3$"', blue] (-0.5,1.76)
	(-0.5, 1.76) edge ["$E_4$", "$4$"'] (-0.5,2.44)
	(-0.5, 2.44) edge ["$F-E_1-E_2-E_3-E_4$" loosestart, blue] (-0.5,2.56);
	\draw (0.5,2.56) edge ["$F$"] (0.5,0);
\end{tikzpicture}
\hspace{1cm}
\begin{tikzpicture}
[
	>=stealth,
	point/.style = {draw, circle,  fill = black, inner sep = 1pt},
	every edge quotes/.style = {font=\tiny, color = black, auto=left, inner sep = 1pt},
	loosestart/.style = {inner sep= 5pt, near start},
	looseend/.style = {inner sep= 5pt, near end},
]	
	\draw node at (-0.5,0.64) [point]{};
	\draw node at (-0.5,1.28) [point]{};
	\draw node at (-0.5,1.76) [point]{};
	\draw node at (-0.5,2.44) [point]{};
	\draw node at (0.5,2.4) [point]{};
	\fill (0,0) ellipse [x radius=20pt, y radius=2pt] node[below=2pt]{\color{black}\tiny $B-E_1$};
	\fill (0,2.56) ellipse [x radius=20pt, y radius=2pt] node[above=2pt]{\color{black}\tiny $B-E_5$};
	\draw
	(-0.5,0) edge ["$E_1-E_2$"] (-0.5, 0.64)
	(-0.5, 0.64) edge ["$E_2-E_3$", "$2$"'] (-0.5,1.28)
	(-0.5, 1.28) edge ["$E_3-E_4$", "$3$"'] (-0.5,1.76)
	(-0.5, 1.76) edge ["$E_4$", "$4$"'] (-0.5,2.44)
	(-0.5, 2.44) edge ["$F-E_1-E_2-E_3-E_4$" loosestart] (-0.5,2.56);
	\draw (0.5,2.56) edge ["$E_5$" looseend] (0.5,2.4)
	(0.5, 2.4) edge ["$F-E_5$"] (0.5,0);
\end{tikzpicture}
\caption{Special decorated graphs on $W_4$ and $W_5$.}\labell{fig-genusp-2}
\end{figure}

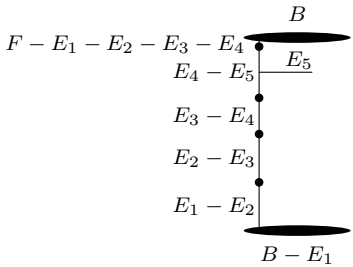
\begin{figure}[h]
\begin{tikzpicture}
[
	>=stealth,
	point/.style = {draw, circle,  fill = black, inner sep = 1pt},
	every edge quotes/.style = {font=\tiny, color = black, auto=left, inner sep = 1pt},
	loosestart/.style = {inner sep= 5pt, near start},
]	
	\draw node at (-0.5,0.64) [point]{};
	\draw node at (-0.5,1.28) [point]{};
	\draw node at (-0.5,1.76) [point]{};
	\draw node at (-0.5,2.44) [point]{};
	\fill (0,0) ellipse [x radius=20pt, y radius=2pt] node[below=2pt]{\color{black}\tiny $B-E_1$};
	\fill (0,2.56) ellipse [x radius=20pt, y radius=2pt] node[above=2pt]{\color{black}\tiny $B$};
	\draw
	(-0.5,0) edge ["$E_1-E_2$"] (-0.5, 0.64)
	(-0.5, 0.64) edge ["$E_2-E_3$"] (-0.5,1.28)
	(-0.5, 1.28) edge ["$E_3-E_4$"] (-0.5,1.76)
	(-0.5, 1.76) edge ["$E_4-E_5$"] (-0.5,2.44)
	(-0.5, 2.44) edge ["$F-E_1-E_2-E_3-E_4$" loosestart] (-0.5,2.56);	
	\draw (-0.5,2.1) edge ["$E_5$" near end] (0.2,2.1);
\end{tikzpicture}
\caption{Special configuration on $W_5$.}\labell{countex2-2}
\end{figure}

By contrast, a generalization for the simply connected case is more involved. 
The main difficulty is to determine the generators of the effective cone of complex curves.  Note that the effective cone may not be finitely generated for a complex blowup of $\CP^2$ at $9$ points or more.

\end{Remark}

\section{Further Remarks}\label{remarks}

A symplectic action of a finite cyclic group $\Z/\Z_n$ is a Hamiltonian action with the moment map being identically zero. However, this does not mean the action is generated by a Hamiltonian diffeomorphism. The generator of the action is a periodic map of period $n$ in $\Symp(M, \ow)$, or in the Torelli part of $\Symp(M, \ow)$ if the action is homologically trivial.

It would be very interesting to know whether any of our $\Z_2$-actions is generated by a Hamiltonian diffeomorphism and therefore corresponds to a $\Z_2$-subgroup in 
$\Ham(M, \ow)$.  
This is a hard question: we do not have an answer but we give two remarks concerning the example in Section \ref{ex-simply}.

\begin{Remark}
	Let $G = \Z/p\Z$ be a finite cyclic group of prime order $p$.
	Consider a $G$-action generated by a Hamiltonian diffeomorphism on a closed symplectic manifold $M$. 
	Let $F$ be the fixed point set of the $G$-action. Denote by $SB$ the sum of Betti numbers. Then  
	\[
	SB (F) = SB (M),
	\] while homology is taken with $\Z/p\Z$ coefficients. 
	
	Indeed, $SB(F)$ is bounded above by $SB(M)$ for any cyclic action of prime order $p$, not necessarily generated by a Hamiltonian diffeomorphism \cite[Chapter 3]{bredon}. On the other hand, let $\sigma$ denote the Hamiltonian diffeomorphism generating the cyclic action. The graph of $\sigma$ and the diagonal intersect cleanly in $M \times M$. Then Arnold's conjecture implies that $SB(F)$ is bounded below by $SB(M)$, using Floer homology for clean intersections \cite{pozniak}.   
	
	The $\Z_2$-action on $(\tM, \tow)$ constructed in Section \ref{example-construction} has $3$ fixed spheres and $3$ fixed points. Hence $SB(F) = 9 = SB(\tM = M_6)$. It does not conclude whether this $\Z_2$-action is generated by a Hamiltonian diffeomorphism.   
\end{Remark} 

\begin{Remark}
The generator of the $\Z_2$-action constructed in Section \ref{example-construction} defines a non-trivial symplectomorphism $\sigma \colon (\tM, \tow) \to (\tM, \tow)$, where $\tM=M_6$ and $\tow =\ow_{1;\frac{1}{2},\frac{1}{4},\frac{1}{4},\frac{1}{4},\frac{3}{16},\frac{1}{8}}$.
We claim that this map is smoothly isotopic to the identity. The idea goes as follows: For any $0<\delta \leq \frac{1}{8}$, one can perform a ${\Z}_{2}$-equivariant symplectic blowup of size $\delta$ at the same point $p$ in the ${\Z}_{2}$-sphere $S$ in $E_5$ in $M$. The induced ${\Z}_{2}$-action on the blowup $\tM$ defines a diffeomorphism (in fact a symplectomorphism with a different symplectic form) $\sigma_{\delta} \colon \tM \to \tM$, which is smoothly isotopic to $\sigma$. For $\delta$ sufficiently small, one finds an isotopy of equivariant embeddings of $\delta$-balls in the local normal form of the $S^1$-invariant sphere $S$. Hence $\sigma_{\delta}$ is isotopic to the generator $\sigma_{p_0}\colon \tM \to \tM$ of the ${\Z}_{2}$-action induced by a ${\Z}_{2}$-equivariant blowup of size $\delta$ at the north pole $p_0$ of $S$. If $\delta$ is sufficiently small, one can perform an $S^1$-equivariant symplectic blowup of size $\delta$ at the $S^1$-fixed point $p_0$ \cite[Theorem 7.1, Proposition 7.2]{karshon}, to get a Hamiltonian $S^1$-action on $\tM$; the element $\pi$ in $S^1$ defines a symplectomorphism $\sigma_{S^1}\colon \tM \to \tM$, which is smoothly isotopic to $\sigma_{p_0}$ and hence to $\sigma$. Since $S^1$ is connected, $\sigma_{S^1}$ is isotopic to the identity.

We do not know whether the symplectomorphism $\sigma$ is symplectically isotopic to the identity.

If it is, it would present a cyclic subgroup in 
$\Ham(\tM, \tow)$ which does not embed in a circle subgroup of $\Ham(\tM, \tow)$.
This is due to the fact that $\Ham(\tM, \tow)$ coincides with the identity component of $\Symp(\tM, \tow)$ when $H^1(\tM)$ is trivial.

If it is not, it may be symplectically isotopic to a Dehn--Seidel twist, or it may provide another example of a symplectomorphism that is smoothly, but not symplectically, isotopic to the identity.

\end{Remark}



\begin{thebibliography}{LaMcDo}
	
\bibitem{ah}
Ahara, K.; Hattori, A.;
\emph{$4$-dimensional symplectic $S^1$-manifolds admitting moment map},
J.\ Fac.\ Sci.\ Univ.\ Tokyo Sect.\ IA Math.\ \textbf{38} (1991), no.\ 2, 251--298. 	
	
\bibitem{an}	
Alexeev, Valery; Nikulin, Viacheslav V.;
\emph{Del Pezzo and $K3$ surfaces},
MSJ Memoirs, 15. Mathematical Society of Japan, Tokyo, 2006. xvi+149 pp. ISBN: 4-931469-34-5.	

\bibitem{coxrings}
Arzhantsev, Ivan; Derenthal, Ulrich; Hausen, J\"urgen; Laface, Antonio;
\emph{Cox rings}. Cambridge Studies in Advanced Mathematics, 144. Cambridge University Press, Cambridge, 2015. viii+530 pp. ISBN: 978-1-107-02462-5.


\bibitem{bredon}
Bredon, Glen E.;
\emph{Introduction to compact transformation groups},
Pure and Applied Mathematics, Vol. 46. Academic Press, New York--London, 1972. xiii+459 pp. 

\bibitem{BW}
Bruce, J. W.; Wall, C. T. C.;
\emph{On the classification of cubic surfaces},
J.\ Lond.\ Math.\ Soc.\ (2), 19(2):245--256, 1979.


\bibitem{chen1}
Chen, Weimin;
\emph{On the orders of periodic diffeomorphisms of 4-manifolds},
Duke Math.\ J.\ 156 (2011), no.\ 2, 273--310.

\bibitem{chen2}
Chen, Weimin;
\emph{Seifert fibered four-manifolds with nonzero Seiberg--Witten invariant},
preprint arXiv:1103.5681 [math.GT].

\bibitem{coray-tsfasman}
Coray, D. F.; Tsfasman, M. A.;
\emph{Arithmetic on singular Del Pezzo surfaces},
Proc.\ Lond.\ Math.\ Soc.\ (3) \textbf{57} (1988), no.\ 1, 25--87.

\bibitem{demazure}
Demazure, Michel;
\emph{Surfaces de Del Pezzo},
S\'eminaire sur les singularit\'es des surfaces,
Lecture Notes in Mathematics 777 (Springer, Berlin, 1980), pp. 21--69.



\bibitem{dolgachev}
Dolgachev, Igor V.;
\emph{Classical algebraic geometry},
A modern view. Cambridge University Press, Cambridge, 2012. xii+639 pp. ISBN: 978-1-107-01765-8.

\bibitem{FM}
Friedman, Robert; Morgan, John W.;
\emph{On the diffeomorphism types of certain algebraic surfaces. I},
J.\ Differential Geom.\ 27 (1988), no.\ 2, 297--369.

\bibitem{gromovcurves}
Gromov, M.;
\emph{Pseudoholomorphic curves in symplectic manifolds},
Invent.\ Math.\ \textbf{82} (1985), no.\ 2, 307--347.

\bibitem{GS:birational}
Guillemin, V.; Sternberg, S.;
Birational equivalence in the symplectic category.
\emph{Birational equivalence in the symplectic category},
Invent.\ Math.\ \textbf{97} (1989), no.\ 3, 485--522.

\bibitem{holmkessler} 
Holm, Tara S.; Kessler, Liat; with an Appendix by Tair Pnini; 
\emph{Circle actions on symplectic four-manifolds}, 
to appear in Comm.\  Anal.\  Geom.\  arXiv:1507.05972v2



\bibitem{karshon}
Karshon, Yael;
\emph{Periodic Hamiltonian flows on four dimensional manifolds},
Memoirs of the Amer.\ Math.\ Soc.\ {\bf 672} (1999).

\bibitem{YK:max_tori}
Karshon, Yael;
\emph{Maximal tori in the symplectomorphism groups of Hirzebruch surfaces},
Math.\ Res.\ Lett.\ \textbf{10} (2003), 125--132.


\bibitem{kk}
Karshon, Yael; Kessler, Liat;
\emph{Circle and torus actions on equal symplectic blow-ups of $\CP^2$},
Math.\ Res.\ Lett.\ {\bf 14} (2007), no.\ 5, 807--823.


\bibitem{blowups}
Karshon, Yael; Kessler, Liat;
\emph{Distinguishing symplectic blowups of the complex projective plane},
J.\ Symplectic Geom.\ \textbf{15} (2017), no.\ 4, 1089--1128.






\bibitem{algann}
Karshon, Yael; Kessler, Liat; Pinsonnault, Martin;
\emph{Counting toric actions on symplectic four-manifolds},
Comptes Rendus Math\'ematique, Rep.\ Acad.\ Sci.\ Canada ~\textbf{37} (2015), no.\ 1, 33--40.




\bibitem{Li-Li02}
Li, Bang-He; Li, Tian-Jun;
\emph{Symplectic genus, minimal genus and diffeomorphisms},
Asian J.\ Math.\  \textbf{6} (2002), 123--144.

\bibitem{liliu-ruled}
Li, Tian-Jun; Liu, Ai-Ko;
\emph{Symplectic structure on ruled surfaces and a generalized adjunction formula},
Math.\ Res.\ Lett.\ \textbf{2} (1995), no.~4, 453--471.


\bibitem{Li-Liu}
Li, Tian-Jun; Liu, Ai-Ko;
\emph{Uniqueness of symplectic canonical class,
surface cone and symplectic cone
of 4-manifolds with $B^+=1$},
J.\ Differential Geom.\ \textbf{58} (2001), 331--370.



\bibitem{rational-ruled}
McDuff, Dusa;
\emph{The structure of rational and ruled symplectic manifolds},
J.\ Amer.\ Math.\ Soc.\ \textbf{3} (1990), no.~3, 679--712.


\bibitem{isotopy}
McDuff, Dusa;
\emph{From symplectic deformation to isotopy},
in: \emph{Topics in symplectic $4$-manifolds} (Irvine, CA, 1996),
85--99, First int.\ Press Lect.\ Ser., I, internat.\ Press,
Cambridge, MA, 1998.

\bibitem{MP:packing}
McDuff, Dusa; Polterovich, Leonid;
\emph{Symplectic packings and algebraic geometry}.
With an appendix by Yael Karshon. Invent.\ Math.\ \textbf{115} (1994), no.\ 3, 405--434.


\bibitem{MS:intro}
McDuff, Dusa; Salamon, Dietmar;
\emph{Introduction to symplectic topology},
Oxford University Press, 1998.


\bibitem{nsmall}
McDuff, Dusa; Salamon, Dietmar;
\emph{J-holomorphic curves and symplectic topology},
American Mathematical Society Colloquium Publications, 52. American Mathematical Society, Providence, RI, 2004. xii+669 pp. ISBN: 0-8218-3485-1.

\bibitem{pozniak}
Po\'zniak, Marcin; 
\emph{Floer homology, Novikov rings and clean intersections}, 
Northern California Symplectic Geometry Seminar, 119--181, 
Amer.\ Math.\ Soc.\ Transl.\ Ser.\ 2 \textbf{196}, Adv.\ Math.\ Sci.\ \textbf{45}, Amer.\ Math.\ Soc.\, Providence, RI, 1999.

\bibitem{salamon}
Salamon, Dietmar;
\emph{Uniqueness of Symplectic Structures},
Acta Math.\ Vietnam.\ \textbf{38} (2013), no. 1, 123--144.

\bibitem{taubes1}
Taubes,  Clifford Henry;
\emph{The Seiberg--Witten and the Gromov invariants},
Math.\ Res.\ Lett.\ {\bf 2} (1995), 221--238.




\end{thebibliography}
\end{document}